\newcommand{\R}{\mathbb R}
\newcommand{\F}{\mathcal F}
\newcommand{\G}{\mathcal G}
\newcommand{\h}{\mathcal H}
\newcommand{\ES}{\operatorname{ES}}
\newcommand{\ESl}{\ES_l}
\newtheorem{theorem}{Theorem}[section]
\newtheorem{lemma}[theorem]{Lemma}
\newtheorem{prop}[theorem]{Proposition}
\theoremstyle{remark}
\newtheorem*{remark}{Remark}
\title{Erd\H os - Szekeres Theorem for Lines}
\author{Imre B\'ar\'any}
\author{Edgardo Rold\'an-Pensado}
\author{G\'eza T\'oth}
\subjclass[2010]{Primary 52C10, 52A37}
\keywords{Erd\H os-Szekeres theorem, line arrangements}
\begin{document}

\begin{abstract}
According to the Erd\H os-Szekeres theorem, for every $n$, a sufficiently
large set of points in general position in
the plane contains $n$ in convex position.
In this note we investigate the line version of this result, that is, we want
to find $n$ lines in convex position in a sufficiently large set of lines that are in general position.
We prove almost matching upper and lower bounds for the minimum size of the
set of lines in general position
that always contains $n$ in convex position. This is quite
unexpected, since in the case of points, the best known bounds are very far
from each other. 
We also establish the dual versions of many variants and generalizations of
the Erd\H os-Szekeres theorem.
\end{abstract}

\maketitle

\section{Introduction}

The classical Erd\H os-Szekeres Theorem \cite{ES1935} states the following:
For every $n\ge 3$ there is a smallest number $\ES(n)$ such that any family of
at least $\ES(n)$ points in general position in $\R^2$ contains $n$ points
which are the vertices of a convex $n$-gon.
There are hundreds of variants and generalizations of this result.
The best known bounds for $\ES(n)$ are
\begin{equation}\label{eq:ESZ}
2^{n-2}+1\le \ES(n)\le \binom{2n-5}{n-2}+1,
\end{equation}
see \cite{ES1960}, \cite{TV2005}. Since $\binom{2n-5}{n-2}+1\approx
4^n/\sqrt{n}$,
there is a huge gap between these bounds.

We consider the following line or ``dual" version of this theorem: For every
$n\ge 2$ there is a smallest number $\ESl(n)$ such that any family of (at
least) $\ESl(n)$ lines in general position in $\R^2$ contains $n$ lines which
determine
the sides of an $n$-cell.
To our surprise we could find almost no trace of this theorem in the literature.
The only exception is the result of Harborth and M\"oller \cite{HM1994}
which states that for every $n$, every pseudoline arrangement with
sufficiently many pseudolines contains a subarrangement of $n$ pseudolines
with a face that is bounded by all of them.
The existence of $\ESl(n)$ follows from this.

Some clarification is needed. A finite family of lines is in general position
if no three lines are concurrent, no two are parallel and no line is
vertical. From now on we assume that every family of lines that we work with
is
in general position.

Let $n\ge k\ge 2$.
A family of $n$ lines (in general position, as we just agreed) in the plane
\emph{defines a $k$-cell} $P$ if $P$ is a connected component of the
complement of the union of these lines, and the boundary of $P$ contains a
segment
from exactly $k$ of the lines.
A $k$-cell is always convex, it is either bounded, and then it is a convex
$k$-gon, or unbounded, and then its boundary contains exactly
two half-lines.
A family of $n$ lines is in \emph{convex position} if it defines an $n$-cell.
A family of lines \emph{spans} an $n$-cell if it contains $n$ lines in convex position.

Note that a family of $3$ lines always defines three $2$-cells and four
$3$-cells out of which exactly one is
a triangle.
A family of $4$ points in the plane may not determine a convex $4$-gon but $4$
lines always define two $4$-cells, a bounded one and an unbounded
one.

Here is a quick proof of the existence of $\ESl(n)$.
Assume $\F=\{a_1,\dots,a_N\}$ is a family of lines where the equation of $a_i$
is $y=m_ix+c_i$ and the ordering is chosen so that $m_1<m_2<\dots<m_N$. The
triple of lines $a_i,a_j,a_k$ with $i<j<k$ is coloured red if the intersection
of $a_i$ and $a_k$ lies below $a_j$, and is coloured blue otherwise. Ramsey's
theorem implies that, for large enough $N$, $\F$ contains $n$ lines so that
all of their triples are of the same colour. It is easy to see that these $n$
lines are in convex position.
The
$n$-cell defined by them is actually unbounded. Note that, in general, the
existence of a bounded $n$-cell cannot be guaranteed. This is shown by the
lines containing $n=(N-1)/2\ge 5$ consecutive edges of a regular $N$-gon
(where $N$ is odd). Here the lines define an unbounded $n$-cell. It is easy to
see, or follows from
Proposition \ref{prop:atmost1}, that all other cells spanned by the lines have
size at most four.

The above argument gives a doubly exponential bound for $N$. A single exponential
bound follows from \eqref{eq:ESZ} and from the fact that, in the projective
plane, the point and line versions of the Erd\H os-Szekeres problem are dual
to each other.
On the other hand, in the affine plane the point and line versions of the
Erd\H os-Szekeres theorem are not dual to each other: the dual of the convex
hull of $n$ points is not a cell (or an $n$-cell) in the arrangement of lines
dual to the points. However something can be saved from duality. Namely, caps
and cups go to cups and caps by duality and this will
be discussed in detail in Section~\ref{sec:upper}.
By this observation we obtain $\ES_l(n)\le \ES(2n)$.

Our main result establishes almost matching upper and lower bounds for $\ES_l(n)$.

\begin{theorem}\label{mainresult}
For $n\ge 3$ we have
$$2\binom{n-4}{\lceil n/2\rceil -1}^2\le \ES_l(n)\le \binom{2n-4}{n-2}.$$
\end{theorem}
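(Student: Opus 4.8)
The plan has two independent parts, the upper and the lower bound, and both run through point–line duality. Send each line $y=mx+c$ to the point $(m,c)$ of a ``dual'' plane; as the original lines are in general position so are the dual points, and the crucial feature is that their left-to-right order is exactly the slope order of the lines. The first task is to translate ``convex position''. I would show that $n$ of the lines span an $n$-cell if and only if there is a line $L$ in the dual plane so that the dual points below $L$ are in cap position (no three of them form a cup) and the dual points above $L$ are in cup position. Indeed, a face $F$ of the arrangement is bounded by all $n$ lines precisely when the lines passing below $F$ all appear on the upper envelope of that subfamily and the lines passing above $F$ all appear on the lower envelope of theirs; applying at a point of $F$ the dual correspondence (a point $(x_0,y_0)$ lies below $y=mx+c$ iff the dual point $(m,c)$ lies below the dual line $\{c=-x_0m+y_0\}$) turns ``below/above $F$'' into ``below/above'' a dual line $L$, while ``lies on the common upper [lower] envelope'' becomes ``the dual points form a cap [cup]''. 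Taking $L$ very high or very low recovers the known fact that an $n$-cap or an $n$-cup of lines is in convex position; intermediate positions of $L$ account for the bounded $n$-cells. Equivalently: some $n$ of the lines are in convex position iff for some line $L$ the longest cap among the dual points below $L$ and the longest cup among those above $L$ have lengths summing to at least $n$.

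For the upper bound I would then show that every set of $\binom{2n-4}{n-2}$ points in general position admits such a line $L$. This is an Erd\H os–Szekeres-type count: one runs the familiar ``either a longer cap or a longer cup'' exchange argument, but now with the extra freedom of splitting the point set between the two sides of the separating line. Writing $g(a,b)$ for the least $N$ forcing a cap of length $\ge a$ below some line together with a cup of length $\ge b$ above it, the exchange step gives $g(a,b)\le g(a-1,b)+g(a,b-1)-1$, and optimizing over the splits $a+b=n+1$ (which include the two degenerate cases, namely ``$n$-cap'' and ``$n$-cup'') lands on $\binom{2n-4}{n-2}$ rather than $\binom{2n-4}{n-2}+1$; this saved unit is the entire improvement over what the plain cap/cup lemma yields. (The bound $\ESl(n)\le \ES(2n)$ from the introduction is vastly weaker, so the content is precisely this sharpened count.)

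For the lower bound I would build a configuration $X$ of exactly $2\binom{n-4}{\lceil n/2\rceil-1}^{2}-1$ points in general position for which \emph{every} line $L$ has the property that the longest cap below $L$ and the longest cup above $L$ sum to at most $n-1$, and then take the dual lines. The configuration should be a product of two Erd\H os–Szekeres extremal cap/cup configurations with parameters about $\lceil n/2\rceil$ and $\lfloor n/2\rfloor$ --- this is what produces the square $\binom{n-4}{\lceil n/2\rceil-1}^{2}$ --- placed along a steep arc so that the long caps accumulate near the bottom and the long cups near the top; then whenever a line $L$ has a long cap below it, the part of $X$ above $L$ is forced to be small and hence cup-short, and symmetrically, so the sum never reaches $n$. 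A second, mildly perturbed copy, stacked so that no cap or cup crossing the two copies can reach length $n$ either, contributes the factor $2$. Counting $X$ and invoking the characterization above then shows that no $n$ of the dual lines are in convex position.

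The step I expect to be the genuine obstacle is the lower bound construction. To reach cardinality of order $4^{n}$ one is forced to take a product of two \emph{near-maximal} cap/cup configurations, but a naive blow-up of two $\binom{n-4}{\lceil n/2\rceil-1}$-sized sets, while of the right size, contains cap-then-cup patterns of length close to $2n$; so the two factors must be carefully balanced and the whole set bent so that caps and cups are separated in the vertical direction, and one must then verify that ``no separating line works'' holds uniformly over all directions and positions of $L$, including across the two stacked copies. On the upper-bound side the only subtlety is carrying the Erd\H os–Szekeres recursion out to the exact constant.
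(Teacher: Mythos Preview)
Your plan hinges on the duality characterization, and that is where the real gap lies. The forward direction is fine: if $n$ lines bound a common cell $C$, pick $p\in C$, and then the lines above $p$ form a line-cap and those below a line-cup, which dualises to a cap below $L$ and a cup above $L$. But the converse---which your upper bound needs---is not what you sketch. Take $U=\{y=0,\ y=x-100\}$ and $D=\{y=2x-10\}$ and $p=(-1000,-1500)$: then $U$ is above $p$ and forms a cap, $D$ is below $p$ and forms a cup, yet the cell of the arrangement containing $p$ is bounded only by $y=x-100$ and $y=2x-10$, not by $y=0$. So your argument ``applying at a point of $F$ the dual correspondence'' does not locate the $n$-cell; the three lines \emph{do} span a $3$-cell, but elsewhere. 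Whether the converse (``there exists an $L$ with cap below and cup above'' $\Rightarrow$ ``these $n$ lines span an $n$-cell somewhere'') is even true is a genuine question you have not addressed, and without it the function $g(a,b)$ and its recursion are floating: you never say what the ``exchange step'' is for this two-sided quantity, nor why the answer is $\binom{2n-4}{n-2}$ rather than $\binom{2n-4}{n-2}+1$. The paper gets the saved unit by a completely different, concrete device: it applies a projective transformation making the family a \emph{vertical configuration} (the intersection of the extreme-slope lines is topmost), then runs a Kleitman--Pachter argument on the line arrangement itself, using that in a vertical configuration any $(n-1)$-cap starting at $a_1$ is completed to an $n$-cell by $a_N$.

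For the lower bound you only need the forward direction of your characterization, so the logic is sound, but the construction is not yet a construction. ``Product of two extremal cap/cup sets placed along a steep arc, then a second mildly perturbed copy'' does not pin down anything one could verify; in particular you would have to rule out $n$-cells that are unbounded to the left or right, and these do not reduce to the cap-below/cup-above picture. The paper's construction is a recursive nesting of line families: one builds $\F_{k,l}$ with $\binom{k+l-2}{k-1}$ lines spanning no $(k+1)$-cup, no $(l+1)$-cap, and no $4$-cell unbounded to the right, then substitutes a small affine copy of $\F_{k,k}$ (via an unbounded-cell-preserving affine map) for each line of a reflected copy $\F'_{k,k}$, and finally doubles the whole thing using a reflected pair. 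The verification that no $n$-cell appears is a case analysis on how many of the substituted clusters a putative $n$-cell meets---this is exactly the step you flag as the obstacle, and the paper handles it with a concrete argument you have not supplied.
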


Here the lower bound is of order $4^n/n$ and the upper one is of order
$4^n/\sqrt n$. So the lower bound is much larger than the one
in \eqref{eq:ESZ},
which is conjectured to be the true value of $\ES(n)$. Thus one may wonder if this
conjecture, the so called Happy End Conjecture, is correct or not.
In later sections we study variants of the Erd\H os-Szekeres problem for lines,
most of them are inspired by variants of the original one. Further
information on the Erd\H os-Szekeres Theorem can be found in the surveys
\cite{MS2000} and \cite{BK2001}.

\section{Initial values of \texorpdfstring{$\ESl(n)$}{ESl(n)}}

First we compare the known values of $\ES(n)$ and $\ESl(n)$ for small values
of $n$ in Table~\ref{table:values}. Note that any two lines define a $2$-cell
so $\ESl(2)=2$ but $\ES(2)$
does not make much sense.

\begin{table}
\centering
\begin{tabular}{|c|c|c|}
\hline
$n$ & $\ES(n)$ & $\ESl(n)$ \\
\hline
$2$ & - & $2$ \\
$3$ & $3$ & $3$ \\
$4$ & $5$ & $4$ \\
$5$ & $9$ & $7$ \\
$6$ & $17$ & $\ge 15$ \\
\hline
\end{tabular}
\vspace{10pt}
\caption{Known values of $\ES(n)$ and $\ESl(n)$.}
\label{table:values}
\end{table}

The exact value of $\ES(6)$ was confirmed with the help of a computer \cite{SP2006}.
We determined the value of $\ESl(5)$ by a computer analysis of all
possible configurations of $7$ lines. To do this, we used the database on
Tobias Christ's home web-page
\url{http://www.inf.ethz.ch/personal/christt/line_arrangements.php} containing
all possible simple line arrangements of up to $9$ lines \cite[Section
3.2.5]{Chr2011}.
The bound for $\ESl(6)$ is shown in Figure~\ref{fig:ESl_6}.

In principle, a family of $n$ lines may define several $n$-cells, but this
only happens for $n=2,3,4$.

\begin{prop}\label{prop:atmost1}
A family of $n\ge 5$ lines in the plane defines at most one $n$-cell.
\end{prop}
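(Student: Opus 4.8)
\emph{Proof plan.} I would argue by induction on $n$, the engine being a \emph{reduction lemma}: if $P$ is an $n$-cell of a family $\F$ of $n$ lines and $\ell\in\F$, then the cell $P'$ of $\F\setminus\{\ell\}$ containing $P$ is an $(n-1)$-cell, $\ell$ meets the interior of $P'$, and $P$ is one of the (at most two) pieces into which $\ell$ cuts $P'$; moreover the other piece $R$ is bounded only by $\ell$ together with the at most two edges of $P'$ met by $\ell$, so $R$ touches at most three lines. To see this, note that since $P\subseteq P'$ every line supporting an edge of $P$ other than $\ell$ already supports an edge of $P'$, so $P'$ is an $(n-1)$-cell with exactly $n-1$ edges; and for $P$ to keep touching all of them, each edge of $P'$ must retain a portion on $P$'s side of $\ell$, which leaves $R$ with no complete edge of $P'$. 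As $n\ge4$, such an $R$ is not an $n$-cell, so $P$ is the unique $n$-cell among the pieces of $P'$.

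Granting this, the case $n\ge6$ is immediate: if $P,Q$ are $n$-cells, delete any $\ell\in\F$; the cells of $\F\setminus\{\ell\}$ containing $P$ and $Q$ are $(n-1)$-cells, hence equal by the inductive hypothesis, hence $P=Q$ by the reduction lemma. This argument collapses for $n=5$ precisely because a $4$-line family has \emph{two} $4$-cells (one bounded, one unbounded), so the base case $n=5$ is where the real work lies.

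For $n=5$, suppose $P\ne Q$ are $5$-cells, let $D\subseteq\F$ be the set of lines separating $P$ from $Q$, and set $d=|D|\ge1$. Let $K$ be the cell of $\F\setminus D$ containing $P$ and $Q$; it is a $(5-d)$-cell (it inherits from $P$ an edge on each of the $5-d$ lines outside $D$), and $P,Q$ are the two ``antipodal'' cells carved out of $K$ by the $d$ lines of $D$, lying on opposite sides of each of them. Now fix an edge $E$ of $K$. Its supporting line is touched by both $P$ and $Q$, so $E$ contains a subsegment lying on $P$'s side of all $d$ lines of $D$ and another lying on $Q$'s side of all of them; since moving along $E$ across a line of $D$ toggles a single coordinate of the $D$-sign-pattern, $E$ must be crossed by all $d$ lines of $D$. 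But a line of $D$ meets $\partial K$ in at most two points, while it crosses every one of the $5-d$ edges of $K$; hence $5-d\le2$, i.e.\ $d\in\{3,4,5\}$.

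It remains to kill $d\in\{3,4,5\}$, which is the substantive part. If $d\ge4$, choose $\ell\in\F$ so that all four remaining lines separate $P$ from $Q$ (the unique line outside $D$ if $d=4$, any line if $d=5$); then the two $4$-cells $B$ (bounded) and $U$ (unbounded) of $\F\setminus\{\ell\}$ must be the cells containing $P$ and $Q$, so they carry complementary sign-patterns. But since $B$ is bounded its outer normals positively span the plane, hence so do their negatives, forcing $U$ to be bounded — a contradiction. If $d=3$, then $K$ is a quadrant with apex $v_0$, each line of $D$ crosses both edges of $K$ (hence cuts off the corner $v_0$), and the edge analysis above forces $v_0$ to lie on $P$'s side of all three lines of $D$, or on $Q$'s side of all three; say the former, so $P$ is the corner cell and $Q$ the ``far'' one. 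Taking coordinates with $v_0$ the origin and $K$ the positive quadrant, the lines of $D$ become $x/\alpha_t+y/\beta_t=1$ ($\alpha_t,\beta_t>0$, $t=1,2,3$), and then the $t$-th line is active on $P$ exactly when $(1/\alpha_t,1/\beta_t)$ lies on the ``northeast'' Pareto frontier of these three points, and active on $Q$ exactly when it lies on the ``southwest'' one. For both to hold for all three points the three points must be collinear, i.e.\ the three lines of $D$ must be concurrent, which general position forbids. So some line of $D$ fails to touch $P$ or fails to touch $Q$, contradicting that both are $5$-cells. The only genuinely delicate point is this last one: everything up to it is bookkeeping, whereas $d=3$ needs the convexity (Pareto-frontier) argument together with non-concurrence.
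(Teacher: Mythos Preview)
Your reduction lemma and the induction down to $n=5$ are sound, and so are the cases $d\le 2$ and $d\ge 4$ of the base case. The gap is in $d=3$. You assert that $D_t$ is active on the corner cell $P$ \emph{exactly} when $u_t=(1/\alpha_t,1/\beta_t)$ lies on the northeast Pareto frontier of $\{u_1,u_2,u_3\}$, and that all three lying simultaneously on the NE and SW Pareto frontiers forces collinearity. Both claims are false. Take $u_1=(1,3)$, $u_2=(2,\tfrac32)$, $u_3=(3,1)$: these form an antichain, hence all three lie on both Pareto frontiers, yet they are not collinear; and a direct computation shows that $D_2$ contributes no edge to $P$ (the boundary of $P$ runs $x$-axis, $D_3$, $D_1$, $y$-axis, skipping $D_2$). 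The correct criterion is that $D_t$ is active on $P$ iff $u_t$ lies on the northeast \emph{convex hull} of the three points (equivalently, the normal cone of $u_t$ in $\operatorname{conv}\{u_1,u_2,u_3\}$ meets the open positive quadrant), and active on $Q$ iff it lies on the southwest convex hull. With that correction your conclusion does follow: the three normal cones partition the circle of directions into three arcs of aperture $<\pi$, and for each arc to meet both the open first quadrant and the open third quadrant one would need at least two arc-endpoints inside each of those quadrants, hence at least four endpoints in total, while there are only three.

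Once repaired, your argument is correct but far longer than necessary. The paper's proof is three sentences: if $P\ne Q$ are two $n$-cells then some line of the family separates them; every line of the family supports both $P$ and $Q$; and two disjoint planar convex bodies admit at most four common tangent lines, so $n\le 4$. In hindsight your inequality $5-d\le 2$ is exactly the ``at most two external bitangents'' half of this fact, and your $d\ge 3$ case analysis laboriously re-derives the ``at most two internal (separating) bitangents'' half.
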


\begin{proof}
Assume it defines two $n$-cells, $P$ and $Q$, say. There is a line in the
family that separates $P$ and $Q$ as otherwise they coincide. There are at
most $4$ distinct lines that are tangent to both $P$ and $Q$. But every line
in the family is tangent to both $P$ and $Q$, so $n \le 4$.
\end{proof}

\section{Upper bounds for \texorpdfstring{$\ESl(n)$}{ESl(n)}} \label{sec:upper}

The original proof \cite{ES1935} of the upper bound in \eqref{eq:ESZ} uses cups and caps.
A set of points $\{p_1,\dots, p_n\}$, ordered by their $x$-coordinates, forms an \emph{$n$-cup} if the point
$p_i$ is below the line $p_{i-1}p_{i+1}$ for every $1<i<n$. If instead, $p_i$
is above the line $p_{i-1}p_{i+1}$ for every $i$,
then this set forms an \emph{$n$-cap}.

For the line case we can do something similar.
Let $\F=\{a_1,\dots,a_n\}$ be a family of $n$ lines in general position.
We say that $\F$ forms an \emph{$n$-cup} (resp. \emph{$n$-cap})
if $\F$ defines an $n$-cell $C$
with the property that the intersection of $C$ with vertical line is a
half-line bounded from below (resp. above). We also call cell $C$
an \emph{$n$-cup} or \emph{$n$-cap}, respectively.

Clearly, any non-vertical line forms a $1$-cup and a $1$-cap at the same
time,
and any two non-vertical, intersecting lines form a $2$-cup and a $2$-cap at the same
time.
Suppose that $\F=\{a_1,\dots,a_n\}$,
$n\ge 3$ and
the equation for line $a_i$ is $y = m_i x + c_i$, $m_1<m_2<\dots<m_n$.
It is easy to see that $\F$ forms an \emph{$n$-cup} if $a_i$ is above $a_{i-1}\cap
a_{i+1}$ for every $1<i<n$. This is also equivalent to $\F$ defining an
unbounded $n$-cell open from above.
Similar observations hold for \emph{$n$-caps}.

With these definitions, the proof by Erd\H os and Szekeres works in the line
case almost without
modification.

As mentioned before, there is a duality such that cups and caps for lines go
to caps and cups for points.
This is achieved by using the map
\[
\{y=mx+c\}\mapsto(m,c),
\]
which is a bijection between the set of non-vertical lines and the set of
points in the plane. Assume we are given three non-parallel
lines ordered according to
slope with
equations
$y=m_ix+c_i$ for $i=1,2,3$. These lines are concurrent if and only if the
corresponding points $(m_i,c_i)$ are collinear.
This is because both conditions are equivalent to the equation
\[
\frac{c_2-c_1}{m_2-m_1}=\frac{c_3-c_2}{m_3-m_2}.
\]
Note that here $m_i=m_j$ is excluded by the general position condition. If the lines form a cup, then this equation will be an inequality and the
left-hand side will be larger. The new inequality is equivalent to the points
forming a cap. If instead the lines form a cap, then the points must form a
cup.

Let $f_l=f_l(k,l)$ be the smallest number of lines needed so that every family
of $f_l$ lines contains either a $k$-cup or an $l$-cap. Applying this duality
to what was shown in \cite{ES1935}, we immediately obtain the following.

\begin{lemma}\label{lem:cupcap}
\[
f_l(k,l)=\binom{k+l-4}{k-2}+1.
\]
\end{lemma}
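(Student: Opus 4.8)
The plan is to obtain the lemma directly from the classical cup--cap theorem of Erd\H os and Szekeres \cite{ES1935} via the duality map $\varphi:\{y=mx+c\}\mapsto(m,c)$ set up above. Write $f(k,l)$ for the smallest $N$ such that every family of $N$ points in general position contains a $k$-cup or an $l$-cap; Erd\H os and Szekeres proved $f(k,l)=\binom{k+l-4}{k-2}+1$. The entire content of the argument is the translation between the line world and the point world, since the combinatorial core is already quoted.

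First I would set up the dictionary. If $\F$ is a family of $N$ lines in general position, then $\varphi(\F)$ is a set of $N$ points in general position in the sense needed for cups and caps: no two lines are parallel, so the points have pairwise distinct first coordinates, and (as verified above) no three lines are concurrent iff no three of the points are collinear. Conversely, any point set of this kind is $\varphi(\F)$ for a unique family $\F$, and such $\F$ is automatically in general position, since the dual of a point is never a vertical line. The key point, already computed above, is that $\varphi$ transposes cups and caps: $k$ lines of $\F$ forming a $k$-cup map to a $k$-cap of $\varphi(\F)$, and $l$ lines forming an $l$-cap map to an $l$-cup.

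Consequently, ``every family of $N$ lines in general position contains a $k$-cup or an $l$-cap'' is equivalent to ``every $N$-point set in general position contains an $l$-cup or a $k$-cap'', i.e.\ to $N\ge f(l,k)$. Hence $f_l(k,l)=f(l,k)=\binom{l+k-4}{l-2}+1=\binom{k+l-4}{k-2}+1$, the last equality because $\binom{k+l-4}{l-2}=\binom{k+l-4}{k-2}$. For the matching lower bound it suffices to dualize the extremal configuration: the Erd\H os--Szekeres construction produces $\binom{l+k-4}{l-2}=\binom{k+l-4}{k-2}$ points in general position with no $l$-cup and no $k$-cap, and $\varphi^{-1}$ carries them to a family of that many lines, in general position, with no $k$-cup and no $l$-cap.

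Since all of this is routine, the only thing to watch is the transposition of cups and caps forced by duality. The reason nothing is lost is the symmetry $f(k,l)=f(l,k)$ of the cup--cap function, i.e.\ the binomial identity $\binom{k+l-4}{k-2}=\binom{k+l-4}{l-2}$; I do not expect any genuine obstacle beyond this bookkeeping and the elementary verification that general position is preserved in both directions.
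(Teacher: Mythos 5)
Your argument is exactly the paper's: the paper obtains the lemma in one line by applying the line--point duality $\{y=mx+c\}\mapsto(m,c)$ (which swaps cups and caps) to the classical Erd\H os--Szekeres cup--cap theorem, using $\binom{k+l-4}{k-2}=\binom{k+l-4}{l-2}$. Your write-up just spells out the bookkeeping (general position in both directions and dualizing the extremal configuration) that the paper leaves implicit, and it is correct.
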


As a consequence we obtain
\[
\ESl(n)\le f_l(n,n)=\binom{2n-4}{n-2}+1.
\]
This is the same as the bound proved by Erd\H os and Szekeres for $\ES(n)$. In
the case of points however, this bound has been improved upon several times
\cite{CG1998,KP1998,TV1998,TV2005} to finally obtain
\[
\ES(n)\le\binom{2n-5}{n-2}+1.
\]
The techniques used to make these improvements do not readily translate to the
line case, but based on the idea of Kleitman and Pachter \cite{KP1998}, we can
get the smallest possible improvement of
$1$.

\begin{proof}[Proof of Theorem \ref{mainresult}, upper bound]
Let $\F=\{a_1,\dots,a_N\}$ be a family of lines in general position ordered
according to slope.
Let $v$ be the intersection of the lines of smallest and
largest slopes in $\F$. Suppose that $v$ has the largest $y$-coordinate of
all intersections of lines of $\F$. If this condition holds, then $\F$ is
called a \emph{vertical configuration}.

Assume that $\F$ does not contain $n$ lines in convex position. Consider the set of all
intersections of the lines, and take their convex hull $C$. Let $v$ be a
vertex of $C$, given by the intersection of lines $a$ and $a'$.
Take a line $\ell$ that avoids $C$ but $v$ is very close to it.
Any projective transformation which maps $\ell$ to the line at infinity
has the property, that it does not change convexity of the lines since
$\ell$ avoids $C$. By a suitable such projective transformation
we can make $\F$ is a vertical configuration.
So, in the sequel we assume that $\F$ is a vertical configuration and the lines $a$ and
$a'$ correspond to $a_1$ and $a_N$.

Let $A\subset \F$ be the set of lines that form the last line, (that is, the
line of largest slope), of some $(n-1)$-cup in $\F$, and let $B\subset \F$ be
the set of lines that form the first line (the line of smallest slope),
of some $(n-1)$-cap in $\F$.

Suppose that $A\cap B\neq \emptyset$. Then there is an $(n-1)$-cup
$a_{i_1},a_{i_2}, \ldots , a_{i_{n-1}}$ in increasing order, and an
$(n-1)$-cap $a_{j_1}, a_{j_2}, \ldots , a_{j_{n-1}}$, also in increasing order
such that
$a_{i_{n-1}}=a_{j_1}$.
Then, just like in the original Erd\H os-Szekeres argument, either $a_{i_1},
a_{i_2}, \ldots , a_{i_{n-1}} a_{j_2}$ forms an $n$-cup, or
$a_{i_{n-2}},a_{j_1}, a_{j_2}, \ldots , a_{j_{n-1}}$,
forms an $n$-cap, a contradiction.

Therefore, $A$ and $B$ are disjoint. Observe, that $a_1$ can not be the last
line of an $(n-1)$-cup, so it is not in $A$. If $a_1$ is in $B$, then, using
the fact that we have a vertical configuration, the $(n-1)$-cup starting with
$a_1$ can be completed to an $n$-cell with the line $a_N$ (see
Figure~\ref{fig:cupextend}). As this is impossible, $a_1$ is not in $B$
either. Let $A'=\F\setminus B$ and $B'=\F\setminus A$.
Thus $N \le \lvert A'\rvert+\lvert B'\rvert-1$.

\begin{figure}[ht]
\includegraphics{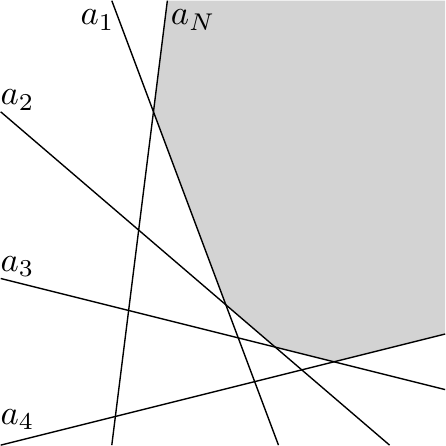}
\caption{A vertical configuration. The lines $a_1,a_2,a_3,a_4$ form a cup,
together with $a_N$ they are still in convex position.}
\label{fig:cupextend}
\end{figure}

If $\lvert A'\rvert\ge \binom{2n-5}{n-2}+1$, then by Lemma \ref{lem:cupcap},
it contains an $n$-cup or an $(n-1)$-cap. Since $n$-cups are excluded, we
have an $(n-1)$-cap, but then we have a line in both $A'$ and $B$, which is
impossible. So $\lvert A'\rvert\le \binom{2n-5}{n-2}$. We can show similarly
that $\lvert B'\rvert\le \binom{2n-5}{n-2}$. Therefore, $N \le\lvert A'\rvert+\lvert B'\rvert-1\le
2\binom{2n-5}{n-2}-1=\binom{2n-4}{n-2}-1$, so $\ESl(n)\le\binom{2n-4}{n-2}$.
\end{proof}

\section{Lower bounds for \texorpdfstring{$\ESl(n)$}{ESl(n)}}

An exponential lower bound follows from
Lemma~\ref{lem:cupcap}. A family with
$\binom{n-4}{\lfloor n/2\rfloor-2}$ lines that does not contain
$\lfloor n/2\rfloor$-caps or $\lceil n/2\rceil$-cups can not contain an $n$-cell. This
gives the bound $\ESl(n)=\Omega(2^n/\sqrt n)$.

We will construct an example which gives the better lower bound of Theorem~\ref{mainresult}.
Some preparations are needed.

If the lines $a_1,\dots,a_n$ form a cap or a cup, then they define an
unbounded $n$-cell $P$. Clearly, any vertical line intersects $P$. An $n$-cell
$P$ defined by lines $a_1,\dots,a_n$ is
\emph{unbounded to the right (resp. left)} if it is unbounded
and it is to the right (resp. left) of some
vertical line. The two lines $a_i,a_j$ for which $P\cap a_i$ and $P\cap a_j$
are unbounded are called
the \emph{end lines of $P$}.

Let $\F$ be a family of lines and let
\[
A=\begin{pmatrix}
\delta & 0 \\
\delta t & \delta^2
\end{pmatrix},\text{ with $t\in\R$ and $\delta>0$}.
\]
Apply an affine transformation of the plane $\vec{v}\mapsto A\vec{v}+\vec{b}$,
where $\vec{v}, \vec{b}\in\R^2$.
This transformation is the composition of a ``flattening''
$(x,y)\mapsto (x,\delta y)$, a scaling
$(x,\delta y)\mapsto(\delta x,\delta^2y)$, a vertical shear transformation
$(\delta x,\delta^2 y)\mapsto (\delta x,\delta tx+\delta^2 y)$ and finally, a
translation
$(\delta x,\delta tx+\delta^2 y)\mapsto (\delta x +b_x,\delta tx+\delta^2 y+b_y)$.
From this, it is not hard to see that a line with slope $m$ goes to a line
with slope $t+\delta m$, hence the ordering of lines according to slope is
preserved. Vertical lines are mapped to vertical lines and their
upward direction is also preserved, so cups, caps, and cells
unbounded to the right and left
remain invariant.
We call these affine transformations
\emph{unbounded-cell-preserving affine transformations}.

Let $a$ be a non-vertical line, $\varepsilon>0$, and let $\F$ be a family of
lines. Let $\F(a, \varepsilon)$ be a family of lines satisfying
the following conditions:

\begin{itemize}
\item[(i)] the slopes of all lines in $\F(a, \varepsilon)$
are within $\varepsilon$ of the slope of $a$;
\item[(ii)] all intersections of the lines
of $\F(a, \varepsilon)$ are below the $x$-axis;
\item[(iii)] the distance between any two intersections
of the lines of $\F(a, \varepsilon)$ is at
most
$\varepsilon$.
\end{itemize}
It is easy to find a suitable unbounded-cell-preserving affine transformation
such that the image of $\F$ satisfies properties
(i), (ii) and (iii).
These new families will be used in the following construction. In
each step we replace each line $a$ with a suitable family $\F(a, \varepsilon)$ that was constructed previously.

\begin{lemma}
Let $k$ and $l$ be positive integers. There is a family $\F_{k,l}$ consisting
of $\binom{k+l-2}{k-1}$ lines that 
spans no $(k+1)$-cap, no $(l+1)$-cap and
no $4$-cell unbounded
to the right.
\end{lemma}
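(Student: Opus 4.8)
The plan is to argue by induction on $k+l$, imitating the classical Erd\H os--Szekeres cup--cap construction in its dual (line) form while carrying the extra property about right-unbounded $4$-cells along the induction; I read the first forbidden configuration as a $(k+1)$-cup, which is what the size $\binom{k+l-2}{k-1}$ dictates. For the base cases $k=1$ or $l=1$ the binomial coefficient equals $1$, so $\F_{k,l}$ is a single line, which spans no $2$-cup, no $2$-cap and no $4$-cell; nothing needs to be checked.

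For the inductive step assume $k,l\ge 2$ and that $\F_{k,l-1}$ and $\F_{k-1,l}$ have already been constructed, with $\binom{k+l-3}{k-1}$ and $\binom{k+l-3}{k-2}$ lines respectively, so that by Pascal's identity the two sizes add up to $\binom{k+l-2}{k-1}$. Fix two generic lines, one of small slope and one of larger slope, whose intersection lies far to the left, and replace the line of small slope by a copy of $\F_{k,l-1}$ and the line of larger slope by a copy of $\F_{k-1,l}$, the copies being obtained by unbounded-cell-preserving affine transformations so that, for a common and sufficiently small $\varepsilon$, properties (i)--(iii) hold for each copy, the slope-intervals of the two copies are disjoint with a gap between them, and every intersection point of a line of the first copy with a line of the second copy lies far to the left of both copies. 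Let $\F_{k,l}$ be the union of the two copies. Because cups, caps and cells unbounded to the right or left are invariant under unbounded-cell-preserving affine transformations, the first copy still spans no $(k+1)$-cup, no $l$-cap and no right-unbounded $4$-cell, and the second copy spans no $k$-cup, no $(l+1)$-cap and no right-unbounded $4$-cell.

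To bound cups, note that, listed in increasing order of slope, a cup in $\F_{k,l}$ is an initial run of lines from the first (smaller-slope) copy followed by a final run of lines from the second copy, the initial run being a cup in $\F_{k,l-1}$ and the final run a cup in $\F_{k-1,l}$; if the cup lies entirely in one copy there is nothing to do. If both runs are nonempty, consider the two last lines of the initial run together with the first line of the final run: since their two outer lines meet far to the left, and the middle line has slightly larger slope, at that far-left abscissa the middle line passes below the intersection point, so the cup inequality fails there. Hence the initial run has at most one line and the cup has at most $1+(k-1)=k$ lines, so $\F_{k,l}$ spans no $(k+1)$-cup. Symmetrically, a cap is also ordered by slope, and the analogous inequality --- applied to the last line of the initial run and the first two lines of the final run --- forces the final run to have at most one line, so a cap has at most $(l-1)+1=l$ lines and $\F_{k,l}$ spans no $(l+1)$-cap. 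It is essential here that the two copies were attached to the two slope ranges in exactly this order and that the cross-intersections were pushed to the left.

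The remaining point, and the one I expect to be the main obstacle, is that $\F_{k,l}$ spans no $4$-cell unbounded to the right. Suppose it spans such a cell $C$, bounded by four lines. If no line of the second copy meets $C$, then $C$ is also a cell of the arrangement of the first copy, a right-unbounded $4$-cell spanned by $\F_{k,l-1}$, contrary to induction; the case in which no line of the first copy meets $C$ is symmetric. Otherwise the four bounding lines of $C$ split as $1+3$, $2+2$ or $3+1$ between the copies. To handle these I would use that far to the right the arrangement $\F_{k,l}$ has no intersection points and its lines occur in the fixed order prescribed by slope (all second-copy lines above all first-copy lines), so $C$ coincides far to the right with the slab between two consecutive lines, while all of the structure making $C$ a $4$-cell rather than a $2$-cell is confined to the far-left region where the cross-intersections live; localizing $C$ there and passing to the cell it induces in the sub-arrangement of the copy that carries at least three of the four lines should yield a right-unbounded cell of a smaller configuration forbidden by induction (in the $2+2$ case it should instead force the two slope-intervals to overlap, which they do not). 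Making this precise is the delicate part: one must pin down the region confining the cross-intersections and perhaps strengthen the inductive statement --- for instance to forbid right-unbounded $j$-cells for all $j\ge 4$, or to record which lines may occur as the end lines of a right-unbounded cell --- in a way that remains compatible with the cup and cap bounds above.
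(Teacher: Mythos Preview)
Your inductive scheme and your cup/cap analysis are essentially the paper's, but there is a genuine error in the geometric setup, not merely a gap to be filled: placing the cross-intersections far to the \emph{left} destroys the right-unbounded $4$-cell property. Concretely, in your $\F_{k,l}$ take two lines $c,b_1$ from the first (small-slope) copy with $m(c)<m(b_1)$ and two lines $b_2,d$ from the second copy with $m(b_2)<m(d)$. Far to the right these four lines are ordered $c,b_1,b_2,d$ from bottom to top. Tracing the cell between $b_1$ and $b_2$ leftwards, its top boundary switches from $b_2$ to $d$ at the within-copy intersection $b_2\cap d$, its bottom boundary switches from $b_1$ to $c$ at the within-copy intersection $b_1\cap c$, and the cell closes only at the cross-intersection $c\cap d$, which you have pushed far to the left of both within-copy clusters. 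All four lines therefore appear on the boundary, and this is a right-unbounded $4$-cell. So your construction, as described, does \emph{not} satisfy the lemma; this is not a matter of strengthening the induction hypothesis.

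The paper avoids this by placing the cross-intersection $a_1\cap a_2$ \emph{above} the $x$-axis while both within-copy clusters lie below it; in effect the cross-intersections sit to the right of both clusters. Crucially, this forces the opposite assignment of the two inductive families: the small-slope line carries $\F_{k-1,l}$ and the large-slope line carries $\F_{k,l-1}$, because with this geometry a cup can pick up at most one line from the \emph{large}-slope bundle (and dually for caps), so the counts again come out to $k$ and $l$. The $4$-cell argument is then short rather than delicate: if the two end lines of a putative right-unbounded $4$-cell lie in different copies, their wedge begins at a cross-intersection and hence to the right of every within-copy crossing, so no further line of $\F_{k,l}$ can enter it and one has only a $2$-cell; if both end lines lie in the same copy, either all four lines do (and induction applies), or any cross-copy line cuts the wedge near the cross-region and leaves only a right-unbounded $3$-cell. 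Your assignment and your ``cross-intersections to the left'' are internally consistent for cups and caps but are jointly incompatible with the $4$-cell requirement; swapping both, as the paper does, resolves the difficulty.
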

\begin{proof}
The construction is essentially the same as the one from
Lemma~\ref{lem:cupcap}, obtained by dualising the construction of Erd\H os and
Szekeres for points. But here we have to be more careful.

For every positive integer $k$, let $\F_{k,1}=\F_{1,k}=\{a\}$, where $a$ is any horizontal line.
None of these families contain a $2$-cap, $2$-cup or $4$-cell unbounded to the right.

Now suppose that we have families $\F_{k-1,l}$ and $\F_{k,l-1}$ satisfying the
properties.
Let $a_1$ and $a_2$ be two lines, both of positive slopes such that $a_2$ has
greater slope than $a_1$. Assume that they intersect above
the $x$-axis.
Let $\varepsilon$ be a very small number and let $\F_{k,l}=\F_{k-1,l}(a_1,
\varepsilon)\cup\F_{k,l-1}(a_2,\varepsilon)$. See Figure~\ref{fig:capcuplower}.

\begin{figure}[ht]
\includegraphics{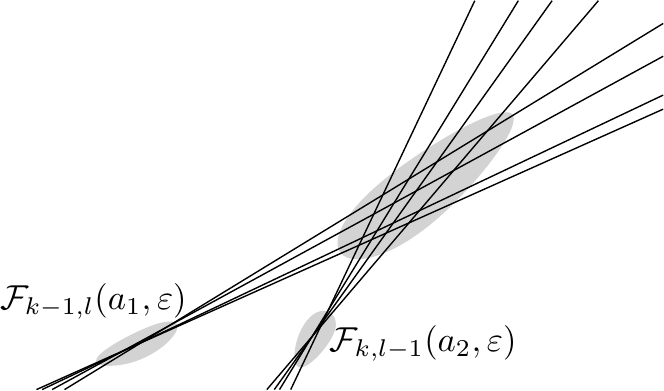}
\caption{The family $\F_{k,l}$, all the line intersections lie within the shaded regions.}
\label{fig:capcuplower}
\end{figure}

Assume that $\G\subset\F_{k,l}$ defines a $(k+1)$-cup, then $\G$ must contain
lines from both $\F_{k-1,l}(a_1,\varepsilon)$ and
$\F_{k,l-1}(a_2,\varepsilon)$. But because $a_1$ has smaller slope than $a_2$,
$\G$ can not contain more than
one line from $\F_{k,l-1}(a_2,\varepsilon)$.
Since the lines in $\G\cap\F_{k-1,l}(a_1,\varepsilon)$ form a cap, it contains
at most $k-1$ lines.
This is a contradiction.
We can show similarly that $\F_{k,l}$ contains no $(l+1)$-cap.

Now assume that $\G\subset\F_{k,l}$ defines a $4$-cell $P$ unbounded to
the right. Let $b_1$ and $b_2$ be the end lines of $P$. It is impossible for
$b_1,b_2\in\F_{k-1,l}(a_1,\varepsilon)$ or
$b_1,b_2\in\F_{k,l-1}(a_2,\varepsilon)$,
therefore we may
assume that $b_1\in\F_{k-1,l}(a_1,\varepsilon)$ and
$b_2\in\F_{k,l-1}(a_2,\varepsilon)$. But then $\G$ can define at most a
$2$-cell unbounded to the right.
\end{proof}

Observe, that if we reflect $\F_{k,l}$ over a vertical line, we obtain a
family of lines $\F'_{k,l}$ with no $(k+1)$-cup, no $(l+1)$-cap, and no
$4$-cell unbounded to the left.

\begin{prop}\label{prop:lower}
For any $n$, there is a family with
$\Theta(4^n/n)$ lines that does not contain $n$ lines in convex position.
\end{prop}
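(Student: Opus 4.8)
The plan is to construct, for each $n$, a family of $2\binom{n-4}{\lceil n/2\rceil-1}^2$ lines in general position that spans no $n$-cell; since $\binom{n-4}{\lceil n/2\rceil-1}=\Theta(2^n/\sqrt n)$ this family has $\Theta(4^n/n)$ lines and realizes the lower bound of Theorem~\ref{mainresult}. The whole argument hinges on a structural description of $n$-cells that I would set down first. If $n\ge 5$ lines span the $n$-cell $P$, cut $\partial P$ at its two extreme points in the $x$-direction (the leftmost and rightmost vertices if $P$ is bounded, the two unbounded ends otherwise). The lines that appear on the lower arc, listed by increasing slope, form a cup $L$; the lines on the upper arc, by increasing slope, form a cap $U$; and $L\sqcup U$ is a partition of the $n$ lines into a $|L|$-cup and a $|U|$-cap. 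The way these interleave in slope is governed by the type of $P$: if $P$ is unbounded to the right, every slope of $L$ precedes every slope of $U$; if $P$ is unbounded to the left, the reverse; if $P$ is a cup, $U=\emptyset$; if a cap, $L=\emptyset$; and if $P$ is bounded, both parts are nonempty and their slope ranges overlap in a more intricate pattern.

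For the construction I would use the families $\F_{k,l}$ of the previous lemma, their vertical reflections $\F'_{k,l}$ (with the mirrored properties: no $(k+1)$-cup, no $(l+1)$-cap, no $4$-cell unbounded to the left), and the substitution operation $\G\mapsto\G(a,\varepsilon)$, which squeezes a family into an $\varepsilon$-neighbourhood of a line $a$ while preserving slope order, cups, caps and cells unbounded to one side, and places all of the family's intersections below the $x$-axis and close together. Choosing $k=\lceil n/2\rceil$ and $l=\lfloor n/2\rfloor-2$, so that $k+l=n-2$ and $\binom{k+l-2}{k-1}=\binom{n-4}{\lceil n/2\rceil-1}=:c$, I would form a family $\mathcal A$ of $c^2$ lines by a two-level substitution: replace each line of $\F_{k,l}$ by a tiny copy, placed exactly as in the lemma, of $\F_{k,l}$ (or of $\F'_{k,l}$, whichever orientation the analysis below forces), so that each substituted block occupies a contiguous interval of slopes. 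I would then set $\F=\mathcal A\cup\mathcal A'$, where $\mathcal A'$ is a vertical reflection of $\mathcal A$ positioned so that every slope of $\mathcal A'$ exceeds every slope of $\mathcal A$; this gives the count $2c^2$. Such an $\F$ will in general contain large cups and large caps; the point is that none of them can be completed to an $n$-cell.

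Verifying that $\F$ spans no $n$-cell is the real content, and the step I expect to be the main obstacle. Given a putative $n$-cell $P=L\sqcup U$, one would first use the structural description to rule out $P$ meeting both $\mathcal A$ and $\mathcal A'$: since these have disjoint slope ranges, the low-slope and high-slope parts of $P$ form arcs of $\partial P$, and for every cell type one of these arcs is forced to be a chain of length at most two or three, which is impossible for large $n$; so $P$ lies in $\mathcal A$ (the $\mathcal A'$ case being symmetric). Inside $\mathcal A$ one would contract each block to a point: a cup, a cap, a one-sidedly unbounded cell, and --- with more care --- a bounded cell of $\mathcal A$ then maps to a cell of the same type in the outer $\F_{k,l}$, which by the lemma has no $(k+1)$-cup, no $(l+1)$-cap and no $4$-cell unbounded to the right, so $P$ can meet only a bounded number of blocks; and within each block the cup part $L$ contributes a cup and the cap part $U$ a cap. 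The naive conclusion from this is only $|L|=O(k)$ and $|U|=O(l)$, which is too weak, so the heart of the proof is a sharper fact: because of the ``all intersections below the $x$-axis and clustered'' placement of the blocks --- exactly the mechanism by which, in the lemma, a cup is forced to meet the higher-slope half of $\F_{k,l}$ in at most one line --- a cup of $\mathcal A$ meets all but one of its blocks in a single line, and dually for caps, and moreover a block shared by $L$ and $U$ would host inside itself a small cup-before-cap (a small cell unbounded to the right of a copy of $\F_{k,l}$) and so is essentially excluded. Pushing these refinements through would give $|L|\le k$ and $|U|\le l$, hence the contradiction $n=|L|+|U|\le k+l=n-2$. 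Getting this to work uniformly for cups, caps, cells unbounded to the left or right, and especially for bounded cells --- where no direction is singled out and the interleaving of $L$ and $U$ is most complicated --- is where I expect essentially all of the difficulty to lie, and is presumably also what fixes the precise choice of $k,l$ and the orientation of the inner substitution.
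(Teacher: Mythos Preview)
Your two-level substitution is the right construction, and it is essentially what the paper does. For Proposition~\ref{prop:lower} the paper does not need the reflected half $\mathcal A'$ nor asymmetric parameters: with $n=2k+2$ it takes the \emph{outer} family to be $\F'_{k,k}$ (after an unbounded-cell-preserving affine map making all slopes positive and all intersections above the $x$-axis) and replaces each of its lines $a_i$ by a block $\F_{k,k}(a_i,\varepsilon)$, getting $\binom{2k-2}{k-1}^2=\Theta(4^n/n)$ lines. The extra factor of $2$ you are chasing belongs to Theorem~\ref{th:lower}, not to this proposition.

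Where your proposal genuinely diverges is the analysis, and there the gap you anticipate is real. You split a putative $n$-cell into its lower cup $L$ and upper cap $U$ and aim for $\lvert L\rvert\le k$, $\lvert U\rvert\le l$; but the lemma-style fact you quote (``a cup of $\mathcal A$ meets all but one of its blocks in a single line'') does not give this, since $L$ may still meet up to $k$ blocks and one of them in up to $k$ lines, and your ``refinements'' are not spelled out. The paper organizes the analysis differently and avoids this difficulty: for a putative $n$-cell $C$ defined by $\G$, set $\G_i=\F_{k,k}(a_i,\varepsilon)\cap\G$ and look at the cell $C(\G_i)\supset C$ defined by $\G_i$. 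The placement of the blocks forces that if $C(\G_i)$ is bounded or unbounded to the left then $\G=\G_i$, and if it is unbounded to the right then $\lvert\G\setminus\G_i\rvert\le 1$; hence every $\G_i$ with $\lvert\G_i\rvert\ge 2$ defines a cup or a cap. One then does a short case split on $\lvert I\rvert$ where $I=\{i:\lvert\G_i\rvert\ge 2\}$: two cups (or two caps) among the $C(\G_i)$ are geometrically impossible; $\lvert I\rvert=2$ gives one cap and one cup of size $\le k$ each plus at most one stray line, total $\le 2k+1<n$; and $\lvert I\rvert\le 1$ is easier. So the right invariant is the \emph{type of $C(\G_i)$ block by block}, not the global cup--cap decomposition of $C$; once you switch to that viewpoint the bounded-cell case, which you flagged as the hard one, is handled uniformly with the others.
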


\begin{proof}
Assume for simplicity that
$n=2k+2$ for some $k\ge 4$ and let $\F_{k,k}'=\{a_1,\dots,a_N\}$, where the
lines $a_i$ are ordered according to slope and $N=\binom{2k-2}{k-1}$.
Applying a suitable unbounded-cell-preserving affine
transformation, we can assume that
every $a_i$ has positive slope and all intersections of the lines are above
the $x$-axis.
Now define $\F=\bigcup_i\F_{k,k}(a_i,\varepsilon)$ for some $\varepsilon>0$
very small. The number of elements in $\F$ is
\[
\binom{2k-2}{k-1}^2=\Theta\left(\frac{4^n}{n}\right).
\]

We now show that for $n>4$, $\F$ spans no $n$-cell. Assume for a contradiction
that $\G=\{b_1,\dots,b_n\}$ defines an $n$-cell $C$, $\G\subset\F$.
For any subset $\G'\subseteq\G$ of size $n'\le n$, let $C(\G')$ be the cell
$C'$ defined by $\G'$ which contains $C$.

For each $a_i$, consider the set $\G_i=\F_{k,k}(a_i,\varepsilon)\cap\G$.
Observe, that for any $i$, if $C(\G_i)$ is bounded or
unbounded to the left, then $\G=\G_i$, since no line in $\G\setminus\G_i$
could intersect $C(\G_i)$.
If $C(\G_i)$ is unbounded to the right, then $\G\setminus\G_i$ can contain at
most one line. Therefore, we can assume that for each $i$,
$C(\G_i)$ is a cup or a cap.

We divide the proof into 4 cases depending on the cardinality
of $I=\{i:\lvert\G_i\rvert >1\}$.
The elements of $I$ are just the subscripts $i$ for which
$\F_{k,k}(a_i, \varepsilon)\cap\G$
contains at least $2$ lines.

\begin{figure}[ht]
\centering
\includegraphics{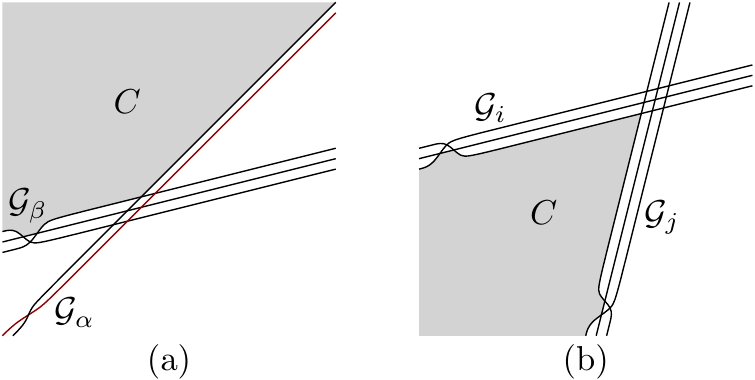}
\caption{Pictures for cases 1 and 2 of Proposition \ref{prop:lower}.}
\label{fig:cases}
\end{figure}

\noindent\emph{Case 1.} Suppose that $\lvert I\rvert\ge 3$.
By the previous observations, for each $i\in I$, $C(\G_i)$ is a cup or a cap.
Assume that $C(\G_{\alpha})$ and $C(\G_{\beta})$ are both cups, $\alpha,\beta \in I$ and $\alpha >
\beta$. Then the line in $\G_{\alpha}$ of the smallest slope can not intersect
$C$,
which is a contradiction (see part (a) of Figure \ref{fig:cases}). The case of two caps can be arranged the same way.

\noindent\emph{Case 2.} If $I=\{i,j\}$ with $i<j$, then $\G$ must define an
$n$-cell unbounded to the left.
Furthermore, $\G_i$ defines a cap and
$\G_j$ defines a cup. This implies that $\G_i\cup\G_j$ contains at most $2k$
lines. Finally, there can be at most one other line in $\G$ because
$a_i$, $a_j$ and the lines in $\G\setminus(\G_i\cup\G_j)$ form a cell
unbounded to the left (see part (b) of Figure \ref{fig:cases}). Thus $\lvert\G\rvert\le 2k+1$, a contradiction.

\noindent\emph{Case 3.} Suppose now that
$I$ contains a single element, $i$.
Let $a'_i$ be an arbitrary element of
$\G_i$ and let $\G'=(\G\setminus\G_i)\cup\{a'_i\}$.
That is, remove all lines of $\G_i$ from $\G$, except one.
The cell $C(\G')$ is unbounded to the left, a cup, or a cap, otherwise
the other lines in $\G_i$ could not intersect it.
Since $\G_i$ is a cup or cap, $\G$ contains at most $k+2$ lines if
$C(\G')$ is unbounded to the left, and at most $2k-1$ lines
if $C(\G')$ is a cup or cap.

%
%
%
%

\noindent\emph{Case 4.} If $I=\emptyset$, then $\G$ is essentially the same as
a subset of $\F_{k,k}'$. It therefore has at
most $2k$ elements.

If $n=2k+1$, then a bound of
$\left(\binom{2k-2}{k-1}+1\right)\binom{2k-3}{k-1}$ can be obtained in the
same way considering
$\F=\F_{k,k}(a_1,\varepsilon)\cup\bigcup_{i\ge 2}\F_{k-1,k}(a_i,\varepsilon)$.
\end{proof}

The lower bound in Proposition~\ref{prop:lower} is weaker than that in Theorem~\ref{mainresult}
by a factor of $2$. The better construction is a direct consequence of the
following result. The proof is similar to that of
Proposition~\ref{prop:lower}, so we only present the construction.
The rest of the proof is left to the reader.

\begin{theorem}\label{th:lower}
Let $k\ge 2$ be an integer. If $n=2k+2$, there is a family
of $$2\binom{2k-2}{k-1}^2$$ lines that spans no $n$-cell.
If $n=2k+1$, there is a family
of $$2\left(\binom{2k-2}{k-1}+1\right)\binom{2k-3}{k-1}$$ lines that
spans no $n$-cell.
\end{theorem}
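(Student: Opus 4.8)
The plan is to imitate the construction of Proposition~\ref{prop:lower}, but to start from a cleverly chosen ``outer'' family instead of $\F_{k,k}'$ itself, so that we gain the extra factor of $2$. Recall that in Proposition~\ref{prop:lower} the outer family $\F_{k,k}'$ has $\binom{2k-2}{k-1}$ lines, no $(k+1)$-cup, no $(k+1)$-cap and no $4$-cell unbounded to the left, and each of its lines is blown up into a copy of $\F_{k,k}$, which has no $(k+1)$-cup, no $(k+1)$-cap and no $4$-cell unbounded to the right. The slack in that argument is Case~4 (and to a lesser extent Case~2): when $I=\emptyset$ the configuration $\G$ behaves like a subconfiguration of $\F_{k,k}'$ and so can only have up to $2k<n=2k+2$ lines --- we are losing two lines there. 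To recover them, first I would take the union of $\F_{k,k}'$ and its vertical reflection $\F_{k,k}$, placed far apart and appropriately sheared by an unbounded-cell-preserving affine transformation so that every line of the first block has smaller slope than every line of the second, and all intersections sit above the $x$-axis; call this family $\h$, with $2\binom{2k-2}{k-1}$ lines. One checks directly that $\h$ still spans no $(k+1)$-cup and no $(k+1)$-cap --- any such cap or cup meeting both blocks can use at most one line from the ``wrong'' block because of the slope separation --- and that $\h$ spans no $5$-cell of any flavour: a cell using lines from both blocks has very constrained end lines, so only a bounded number of its sides can come from each side.

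Next I would replace each line $a$ of $\h$ by a small scaled copy $\F_{k,k}(a,\varepsilon)$ (for the $n=2k+2$ case), exactly as in Proposition~\ref{prop:lower}. The resulting family $\F$ has $2\binom{2k-2}{k-1}^2$ lines, which is the claimed count. For the analysis I would run the same four-case split on $I=\{i:\lvert\G_i\rvert>1\}$ for a hypothetical $n$-cell $\G=\{b_1,\dots,b_n\}$. Cases~1, 2 and 3 go through essentially verbatim, because the only properties of the outer family they used were the absence of $(k+1)$-cups/caps and of $4$-cells unbounded to the right; these hold for $\h$ too. The point of passing to $\h$ is Case~4: when $I=\emptyset$, $\G$ is ``essentially a subconfiguration of $\h$'', and since $\h$ spans no $n$-cell for $n\ge 5$ --- indeed no $5$-cell --- we get a contradiction rather than merely the bound $\lvert\G\rvert\le 2k$. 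The same bookkeeping that made $\F_{k,k}'$ avoid $4$-cells unbounded to the left will need to be verified for $\h$ at one size larger; this is where the ``be more careful'' warning from the lemma's proof bites, and it is the main technical obstacle.

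The odd case $n=2k+1$ is handled analogously: the outer family is the union of one copy of $\F_{k,k}'$ (or $\F_{k,k}$) with blow-ups by $\F_{k,k}(a_1,\varepsilon)$ on one distinguished line, and blow-ups by $\F_{k-1,k}(a_i,\varepsilon)$ on the rest, mirrored to kill both the left- and right-unbounded $4$-cells, giving the count $2\bigl(\binom{2k-2}{k-1}+1\bigr)\binom{2k-3}{k-1}$. As the statement invites, I would only spell out this construction and leave the (now routine, given Proposition~\ref{prop:lower}) case analysis to the reader. I expect the genuinely delicate point throughout to be checking that the two-block outer family $\h$, after the shearing that separates the slopes of the blocks, really does avoid all small cells unbounded in either direction --- once that is in hand, every other step is a direct transcription of the argument already given for Proposition~\ref{prop:lower}.
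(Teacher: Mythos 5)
Your overall plan --- an outer family of $2\binom{2k-2}{k-1}$ lines built from two blocks, one of $\F_{k,k}$-type and one of $\F'_{k,k}$-type, each of whose lines is then blown up into a cluster of $\binom{2k-2}{k-1}$ lines and analysed by the same four-case split --- is indeed the paper's strategy, but the two properties you assign to the doubled outer family $\h$ are not just unverified, they are impossible. By Lemma~\ref{lem:cupcap} every family of $\binom{2k-2}{k-1}+1$ lines already contains a $(k+1)$-cup or a $(k+1)$-cap, so no family of $2\binom{2k-2}{k-1}$ lines can avoid both; and since $\ESl(5)=7$, no family of at least $7$ lines avoids $5$-cells, so ``$\h$ spans no $5$-cell of any flavour'' fails for every $k\ge 3$. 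This destroys your Case 4 as written: what you actually need there is that $\h$ spans no $(2k+2)$-cell, and because the lines bounding an $m$-cell split into a cup (lower chain) and a cap (upper chain), the unavoidable $(k+1)$-cups and $(k+1)$-caps leave you exactly at the borderline $m\le 2k+2$. Excluding that borderline cell is precisely where the factor of $2$ has to be earned; it depends on how the two blocks are placed (the paper reflects both blocks about the $x$-axis and positions the internal and cross-block intersection regions very deliberately, cf.\ Figure~\ref{fig:lower}), and you give no argument for it.

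The second gap is that you blow up every line of $\h$ with the same inner family $\F_{k,k}(a,\varepsilon)$. The paper instead alternates: it puts clusters of type $\F'_{k,k}(a_i,\varepsilon)$ on the lines of the first (smaller-slope) block and clusters of type $\F_{k,k}(a_i,\varepsilon)$ on the lines of the second block. This swap is not cosmetic. In the analysis of Proposition~\ref{prop:lower}, the reduction ``we may assume each $C(\G_i)$ is a cup or a cap'' and the counts in Cases 2 and 3 use that each cluster has no $4$-cell unbounded towards the side from which lines of the other clusters can enter it; in the mirror-symmetric two-block configuration that side is different for clusters sitting in the two halves, so with a uniform inner family half of your clusters carry the wrong one-sided restriction and, for instance, the bound $\lvert\G\rvert\le 2k+1$ in Case 2 is no longer justified. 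For the same reason your assertion that Cases 1--3 go through verbatim because ``$\h$ has no $4$-cell unbounded to the right'' is unwarranted: $\h$ contains a copy of $\F'_{k,k}$, which is only guaranteed to avoid $4$-cells unbounded to the \emph{left}. So both the outer-level and the inner-level one-sided unboundedness properties must be arranged per block --- this is exactly what the paper's reflected, alternating construction does --- and your proposal as written does not supply either.
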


\begin{proof}
First assume that $n=2k+2$ and let $N=\binom{2k-2}{k-1}$.

Take a line $a$ with slope $1$ and a line $b$ slope $-1$ intersecting above
the $x$-axis. Construct the families
$\F_{k,k}(a, \varepsilon)$ and $\F'_{k,k}(b, \varepsilon)$,
for some very small $\varepsilon$, each with $N$ lines.
Reflect both families about the $x$-axis
and apply a vertical transformation
so that all intersections are above the $x$-axis.
Let $\F$ denote the
resulting family of
$2N$ lines.

Let $\F=\{a_1,\dots,a_{2N}\}$ with the lines
ordered according to slope as usual.
The first $N$ lines are a reflected copy of $\F_{k,k}$ and the last $N$ lines
are a reflected copy of $\F'_{k,k}$.
The resulting configuration is shown in Figure \ref{fig:lower}.

\begin{figure}[ht]
\centering
\includegraphics{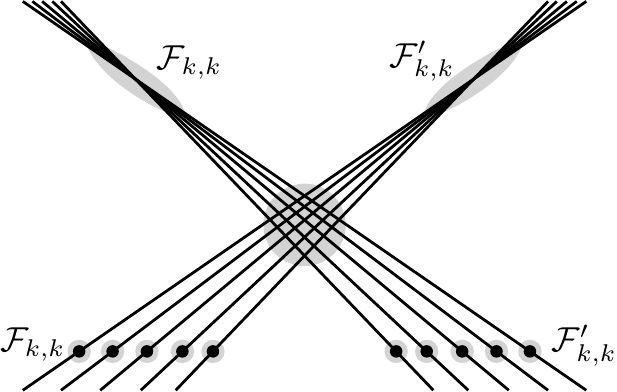}
\caption{Example for the lower bound, all intersections are in the shaded regions.}
\label{fig:lower}
\end{figure}

Finally, we consider $\F'_{k,k}(a_i, \varepsilon)$ for $i\le N$ and
$\F_{k,k}(a_i,\varepsilon)$ for $N<i\le 2N$. The union of these families
contains $2N^2$ lines. An analysis similar to the one made in Proposition
\ref{prop:lower}
shows that it spans no $n$-cell.

The case $n=2k+1$ can be done using the union of the families
$\F'_{k,k}(a_1)\cup\bigcup_{i\ge 2}^N\F'_{k-1,k}(a_i)$ and
$\F_{k,k}(a_{N+1})\cup\bigcup_{i\ge N+2}^{2N}\F_{k-1,k}(a_i)$.
\end{proof}


These examples do not give optimal values for $\ESl(n)$. For small $n$,
similar ideas can be used to give better examples. Figure~\ref{fig:ESl_6}
shows a construction which implies
$\ESl(6)\ge 15$.

\begin{figure}[ht]
\centering
\includegraphics{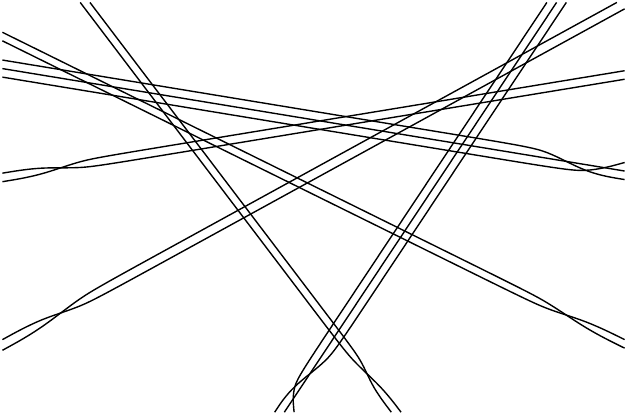}
\caption{$\ESl(6)\ge 15$.}
\label{fig:ESl_6}
\end{figure}

\section{Empty \texorpdfstring{$n$}{n}-cells}

Erd\H os \cite{Erd1981} also asked the following question: Given $n\ge 3$, is
there an integer $N$ such that every family $X$ of $N$ points in general
position contains $n$ points whose convex hull is an empty $n$-gon? An $n$-gon
is \emph{empty} if no point of $X$ lies in its interior. The answer is
positive for $n\le 6$ \cite{Har1978, Nic2007, Ge2008} and negative for $n\ge 7$
\cite{Hor1983}. The same question can be asked with lines instead of
points. We want to find a subfamily $\G$ of a family $\F$ of lines that
defines an $n$-cell such that no line of $\F$ intersects the interior of this
$n$-cell. Such a cell is called an \emph{empty} $n$-cell.

So the question is whether an arrangement $\F$ of sufficiently many lines (in
general position) defines an $n$-cell. Here is an example showing that the
answer is no for
$n\ge 5$.
For every $k\ge 0$ let $x_{2k}=(k,0)$, and for every $k\ge 1$ let $x_{2k-1}=(0,-k)$.
Construct lines $a_i$ inductively with $x_i\in a_i$ for every $i$.
First define $a_0$ and $a_1$ as any two lines through $x_0$ resp. $x_1$ so that $a_0\cap a_1$ is above the $x$-axis.
For each $i>1$, $a_i$ is the line through $x_i$ so that all the intersections
between the lines $a_0,\dots,a_{i-1}$ are above $a_i$ and all points
$x_0,\dots,x_{i-1}$ are below
$a_i$.
The construction up to $a_5$ is shown on Figure \ref{fig:noempty}.
It is easy to see that with each new line we create only bounded and unbounded
cells of at most four sides.

\begin{figure}[ht]
\centering
\includegraphics{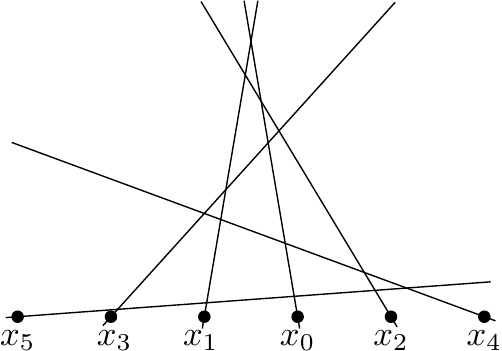}
\caption{A family of lines without empty $n$-cells for $n>4$.}
\label{fig:noempty}
\end{figure}

There are other examples with no empty $n$-gon for $n\ge 5$.
Namely, there are arrangements of pseudo-lines in the Euclidean plane for
which all bounded cells are $3$-cells and $4$-cells. These can be found in
\cite{LLM+2007}, where it is shown that all such examples are
stretchable.

So what remains is to decide whether there is a family of $N$ lines in general
position without empty $3$- or $4$-cells. There is always an empty $3$-cell:
just choose a line $\ell$ from the family $\F$ and let $a,b \in \F$ be the two
lines whose intersection is closest to $\ell$. It is easy to see that $\ell$,
$a$ and $b$ define an empty triangle of $\F$. This argument gives $N/3$
empty triangles
(when $N=|\F|$).

The case of empty $4$-cells is wide open. We could only find one non-trivial
example with no empty $4$-cell.
This is shown in Figure
\ref{fig:no4cell}.
It is worth noting that there is no other example with this
property in the range $3<N<10$.
This was confirmed by a computer.

\begin{figure}[ht]
\centering
\includegraphics{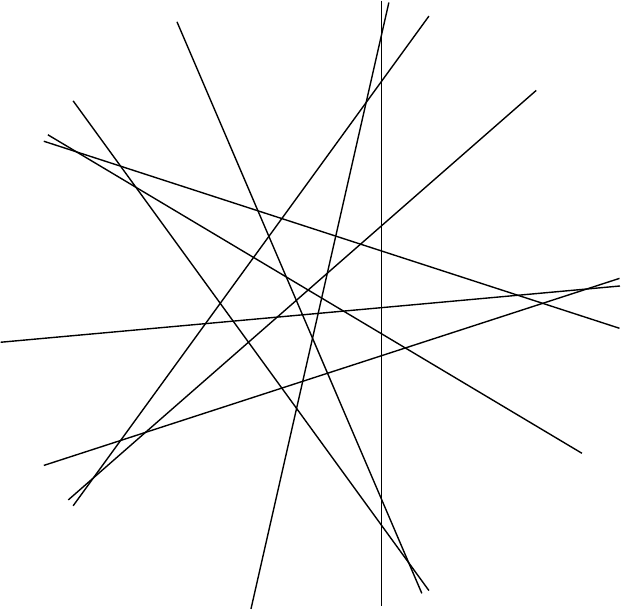}
\caption{An example of $10$ lines with no $4$-cells.}
\label{fig:no4cell}
\end{figure}

The same question has been asked in the projective plane $\R P^2$. It is not
known whether in $\R P^2$ there is family of $N$ lines (in general position)
with no empty $4$-cell for large $N$. The only known non-trivial values for
which there is such a family are $N=6,10,16$
(see \cite{Gru1972}).

It is known \cite{Le1926} that in $\R P^2$ every family of $N\ge 3$ lines (in
general position) defines at least $N$ and at most $N^2/3+O(N)$ empty
$3$-cells. Every line arrangement in the Euclidean plane comes from a line
arrangement in $\R P^2$ by choosing the line at infinity properly, and a
$3$-cell in the projective plane gives rise to a $3$-cell in the Euclidean
plane. It follows that any simple arrangement of $N\ge 3$ lines in the
Euclidean plane determines at least $N$ empty $3$-cells. This bound is sharp,
as is shown by taking the lines forming the sides of a regular
$N$-gon.
For the upper bound we can argue in the following way. Given a line
arrangement of $N$ lines in $\R^2$, we can add to it the line at infinity to
obtain $N+1$ lines in $\R P^2$. The new line may create or destroy at most
$2N$ $3$-cells, therefore there could be at most $N^2/3+O(N)$ empty triangles
in the original arrangement in $\R^2$. Some further examples, results and
questions in this direction can be found in \cite{FP1984} and in
\cite{FK1999}.

In the coloured versions, every two-coloured point set in general position
contains an empty monochromatic triangle, but it is unknown whether there is
always an empty convex monochromatic
quadrilateral.
The case of lines is simpler. Order the lines in the example in Figure
\ref{fig:noempty} according to their intersection of the $x$-axis and colour
them alternately red and blue. Lines $a_0,a_1,\dots,a_n$
define no
empty monochromatic $3$-cell or $4$-cell.
For odd $n$ there is no empty monochromatic $2$-cell either.

\section{The positive fraction version}

In this section we extend the positive fraction Erd\H os-Szekeres theorem from
\cite{BV1998} to the line case. Given families of lines $\F_1,\dots,\F_n$, a
\emph{transversal}
is a set $\{ a_1, \dots , a_n \}$ where $a_i\in\F_i$ for every $i$.

\begin{theorem}\label{th:posfr}
For every $n$ there exists a number $C_n$ such that any family $\F$ of lines
with large enough $\lvert\F\rvert$ contains subfamilies $\F_1,\dots,\F_n$,
each of size at least $C_n\lvert\F\rvert$, such that every transversal of
$\F_1,\dots,\F_n$ defines an $n$-cell.
\end{theorem}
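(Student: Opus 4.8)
The plan is to follow the strategy of B\'ar\'any and Valtr \cite{BV1998} for points and transport it to the line setting via the cup--cap duality developed in Section~\ref{sec:upper}. First I would apply the ordinary Erd\H os--Szekeres theorem for lines (Theorem~\ref{mainresult}) together with a Ramsey-type ``same-type lemma'' for lines: the key point is that whether a transversal of $n$ blocks of lines is in convex position depends only on the ``order type'' of the blocks, i.e.\ on the sign pattern recording, for each triple of blocks $\F_i,\F_j,\F_k$ with slopes increasing in this order, whether the intersection $a_i\cap a_k$ lies above or below $a_j$, uniformly over all choices $a_i\in\F_i$, $a_j\in\F_j$, $a_k\in\F_k$. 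A family of blocks with this uniform property will be called same-type, exactly as in the point case.

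The main work is therefore to prove a same-type lemma for lines: any sufficiently large family $\F$ can be partitioned (after discarding few lines) into a bounded number $M=M(n)$ of blocks $B_1,\dots,B_M$, ordered by slope, each of size $\ge c_M|\F|$, such that every transversal picking one line from each of any $n$ of these blocks has a prescribed order type; moreover we can arrange that all such order types are the ``convex'' one. Here I would dualise: the map $\{y=mx+c\}\mapsto(m,c)$ turns $\F$ into a point set $\F^\ast$, turns slope-order into $x$-order, and turns the ``above/below'' triple predicate for lines into the corresponding ``below/above'' (cap/cup) predicate for points, as spelled out before Lemma~\ref{lem:cupcap}. The same-type lemma of \cite{BV1998} for point sets then yields the partition of $\F^\ast$ into positive-fraction blocks of a fixed order type; pulling back gives the desired block decomposition of $\F$. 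The only subtlety is that convex position of $n$ lines is \emph{not} the dual of convex position of $n$ points, so after obtaining same-type blocks I must still argue that among the $M$ blocks one can select $n$ whose common order type forces every transversal to span an $n$-cell. For this I would run the Ramsey/Erd\H os--Szekeres argument from the introduction \emph{at the level of blocks}: colour each triple of blocks red or blue according to its (now well-defined) order type, take a monochromatic set of $n$ blocks by Ramsey, and observe --- just as for a single family --- that $n$ lines all of whose triples have the same colour are in convex position, hence so is every transversal of the chosen $n$ blocks.

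Assembling these pieces: given $\F$ with $|\F|$ large, first take the positive-fraction same-type decomposition into $M=M(n)$ blocks via the dual of \cite{BV1998}; this costs only a constant factor, so each block has size $\ge c|\F|$ for some $c=c(n)>0$. Then apply Ramsey to the $M$ blocks to extract $n$ blocks $\F_1,\dots,\F_n$ of a single order type; by the remark above every transversal of these is in convex position, i.e.\ defines an $n$-cell. Each $\F_i$ still has size $\ge c|\F|$, so we may take $C_n=c(n)$, and $|\F|$ ``large enough'' just means large enough that the same-type lemma applies and each resulting block is nonempty, which is automatic once $|\F|\ge M(n)/c(n)$.

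The hard part will be the same-type lemma itself, or rather checking that the point-version from \cite{BV1998} dualises cleanly: one must verify that the order type of a transversal of line-blocks is genuinely determined by the triple predicates (no new degeneracies arise from unboundedness of cells, and no two transversal lines become parallel since all slopes in distinct blocks are separated), and that ``same-type for points under the cap/cup predicate'' pulls back to ``same-type for lines under the above/below predicate'' with no loss. Once that correspondence is in place, the Ramsey step on blocks is routine and identical to the existence proof of $\ESl(n)$ given in the introduction, and the positive-fraction conclusion follows immediately with $C_n$ depending only on $n$ through the constants in the same-type lemma and in Ramsey's theorem.
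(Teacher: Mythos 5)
Your argument is essentially correct, but it is not the route the paper takes for this theorem. The paper proves Theorem~\ref{th:posfr} by the counting method of Pach--Solymosi \cite{PS1998} as refined by P\'or and Valtr \cite{PV2002}: one counts the $(4n+1)$-tuples defining $(4n+1)$-cells, finds a single $(2n+1)$-element subfamily that ``supports'' many of them, lets $\F_j$ be the lines crossing two prescribed sides of the $(2n+1)$-cell it defines, and keeps the $n$ largest of these $2n$ side families; this yields a singly exponential constant, roughly $C_n\approx 2^{-32n}n^2$. Your route --- a same-type lemma for blocks of lines (which you obtain by dualising the point version of \cite{BV1998}; the paper instead quotes the line version, Lemma~\ref{lem:stype}, from \cite{BP2013}) followed by a selection of $n$ blocks whose triple ``colours'' force every transversal to be a cup or a cap --- is essentially the alternative the paper itself points out in the remark after Theorem~\ref{th:posfr2}, namely that Theorem~\ref{th:posfr2} implies Theorem~\ref{th:posfr}; its price is a much weaker, doubly exponential constant coming from the same-type lemma (and from Ramsey, though you could replace abstract hypergraph Ramsey by picking one representative line per block and applying Lemma~\ref{lem:cupcap}, since a transversal realises the block colours and cups/caps are hereditary, so $M=\binom{2n-4}{n-2}+1$ blocks suffice). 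Your monochromatic selection also buys a simplification at the end: transversals of the chosen blocks are automatically cups or caps, so you avoid the triangle argument the paper needs in the proof of Theorem~\ref{th:posfr2} to show that replacing one line keeps an $n$-cell. Two small points to tighten: the same-type lemma does not by itself ``partition'' $\F$ --- you should first split $\F$ by slope into $M$ consecutive, equal-size blocks (slope separation is what makes the triple colour of a block triple well defined) and then shrink each block by the lemma; and the parenthetical claim that one can arrange all order types to be ``the convex one'' is an overstatement --- the same-type lemma only makes each triple's type uniform, and it is exactly your subsequent Ramsey/cup--cap step on blocks that extracts the convex subconfiguration.
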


\begin{proof}
The proof is based on the method from \cite{PS1998} as refined in
\cite{PV2002}.
Assume
that $\F$ is a family of $N$ lines in general position and $N> 2m^4$
where
$m=\ESl(4n+1)$. Every $m$-tuple in $\F$ contains a $(4n+1)$-tuple that
defines a $(4n+1)$-cell. The usual double counting argument shows that
the number of $(4n+1)$-cells defined by $(4n+1)$-tuples in $\F$ is at least
\[
\binom{N}{4n+1}\binom{m}{4n+1}^{-1}.
\]

Assume now that $\h=\{a_0,a_1,\dots,a_{4n}\} \subset \F$
defines a
$(4n+1)$-cell $P$ with sides $S_0,S_1,\dots,S_{4n}$ (here $S_i \subset a_i$
for all $i$) where the sides are listed in the order they appear on the
boundary of $P$. Here $S_0$ and $S_{4n}$ are the two half-lines if $P$ is
unbounded, otherwise the common point of $S_0$ and $S_{4n}$ is the point of
$P$ with the greatest $y$-coordinate.
We say in this case that
$\{a_0,a_2,a_4,\dots,a_{4n}\}$ {\sl supports} $\h$. It follows that $\F$ has a
$(2n+1)$-element subset, say $\h^*=\{b_0,b_2,\dots,b_{4n}\}$
that supports at least
\[
\binom{N}{4n+1}\binom{m}{4n+1}^{-1}\binom{N}{2n+1}^{-1} >
\frac {(N-4n-1)^{2n}}{m^{4n}}
\]
$(4n+1)$-tuples in $\F$ that define a $(4n+1)$-cell. The above inequality
holds if $n$ is large enough.
The lines in $\h^*$
define a $(2n+1)$-cell with sides $T_0,T_2,T_4,\dots,T_{4n}$. For
$j=1,3,5,\dots,4n-1$ let $\F_j$ denote the set of lines in $\F$
that intersect
$T_{j-1}$ and $T_{j+1}$. We omit the simple proof of the following fact:
Every transversal of $\F_1,\F_3,\dots,\F_{4n-1}$ defines a $2n$-cell.

Let $t$ be the $n$-th smallest number among the $2n$ integers
$\lvert\F_1\rvert,\lvert\F_3\rvert,\dots,\lvert\F_{4n-1}\rvert$. It follows
that
\[
\frac {(N-4n-1)^{2n}}{m^{4n}}\le\prod_{i=1}^{2n}\lvert\F_{2i-1}\rvert\le t^nN^n,
\]
showing that
\[
t\ge \frac {(N-4n-1)^{2}}{m^{4}N}> \frac {N}{m^4}-1>\frac N{2m^4}.
\]
Finally, let $J$ be the set of subscripts $j$ with $\lvert\F_j\rvert\ge t$.
Clearly, every transversal of the system $\F_j$, ($j\in J$) defines an $n$-cell.
\end{proof}

\begin{remark}
This proof, together with Theorem~\ref{mainresult} shows that one can choose
$C_n$ roughly
$2^{-32n}n^2$ and large enough
$\lvert\F\rvert$ means $\lvert\F\rvert\ge
2m^4=2\ESl^4(4n+1)$ which is about $2^{32n}n^{-2}$
\end{remark}

Here is another positive fraction version of the Erd\H os-Szekeres theorem for
lines:

\begin{theorem}\label{th:posfr2} For every integer $n \ge 2$ there is $C_n'>0$
such that the following holds. Assume $\F$ is a finite family of lines which
is partitioned into nonempty subsets $\F_1,\dots,\F_m$ where
$m=\ESl(n)$. Then there are $n$ subscripts $1\le j_1< j_2 <\dots <j_n\le m$
and subsets $\h_{j_i}\subset \F_{j_i}$ with $\lvert\h_{j_i}\rvert\ge C_n'
\lvert\F_{j_i}\rvert$ (for all $i$) such that every transversal of the
system $\h_{i_1}\dots,\h_{i_n}$ defines
an $n$-cell.
\end{theorem}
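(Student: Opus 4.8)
The plan is to mimic the proof of Theorem \ref{th:posfr}, but working with the blocks $\F_1,\dots,\F_m$ as the basic objects rather than individual lines, so that we can use a "partite" counting argument. First I would choose one representative line $r_i$ from each block $\F_i$. Since $m=\ESl(n)$, the family $\{r_1,\dots,r_m\}$ contains $n$ lines in convex position, say $r_{j_1},\dots,r_{j_n}$ with $j_1<\dots<j_n$, defining an $n$-cell $C$. This already gives the $n$ subscripts; the work is to fatten each chosen line $r_{j_i}$ to a positive fraction $\h_{j_i}$ of its block $\F_{j_i}$ while preserving that every transversal spans an $n$-cell.

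The key step is a stability/robustness observation: the property that $n$ lines define an $n$-cell, and in fact the combinatorial type of the $n$-cell (which pairs of consecutive edges meet, in what cyclic order, and whether the cell is bounded or unbounded and to which side), is determined by the orientations of the $\binom{n}{3}$ triples — equivalently, by which side of each line the intersection of two others lies on. So it suffices to guarantee that for a suitable choice of $\h_{j_i}\subset\F_{j_i}$, every transversal has the \emph{same} orientation pattern as $r_{j_1},\dots,r_{j_n}$. I would obtain this by the standard iterated pigeonhole used in \cite{PS1998}, \cite{PV2002}: for each block $\F_{j_i}$ and each other index $j_{i'}$ and each reference line $r_{j_{i''}}$, the lines of $\F_{j_i}$ split into two classes according to the sign of the relevant triple orientation; keeping the majority class in all these $O(n^2)$ simultaneous splits retains at least a $C_n'=2^{-O(n^2)}$ fraction of $\F_{j_i}$, and — using the fact that the reference lines $r_{j_i}$ already realize a consistent convex-position pattern, together with general position to rule out the degenerate orientation — every transversal of the surviving subfamilies realizes exactly that pattern, hence defines an $n$-cell of the same type as $C$. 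One must also take $\lvert\F_{j_i}\rvert$ large enough, but since the blocks are merely nonempty the correct reading (as in the statement) is that $C_n'\lvert\F_{j_i}\rvert$ is a lower bound that may be less than $1$, in which case the trivial subset $\h_{j_i}=\{r_{j_i}\}$ works; so no lower bound on $\lvert\F\rvert$ is needed, only that $m=\ESl(n)$.

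The main obstacle is making the robustness argument clean: one has to check that fixing the orientation of every triple that involves at least one transversal line and the rest reference lines is \emph{enough} to pin down the orientation of triples involving two or three transversal lines from the same block — but this is where property (iii)-type localization (lines in a block have nearly equal slopes and nearly coincident intersections) is \emph{not} available, since the blocks $\F_i$ are arbitrary. I would circumvent this by instead fixing, for each unordered pair $\{j_i,j_{i'}\}$ of chosen blocks and each third chosen block $j_{i''}$, the sign of the orientation of $(a,a',r_{j_{i''}})$ as $a$ ranges over $\F_{j_i}$ with $a'=r_{j_{i'}}$ held fixed, and symmetrically; a transversal's triple $(a,a',a'')$ then has a determined sign by a short perturbation argument from the reference configuration, using that all the "one-line-moved" signs agree with the reference. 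This costs another bounded factor in $C_n'$ and is the only genuinely delicate point; the rest is the same double-majority pigeonhole as in \cite{PV2002}. I would then simply remark that the details, being routine, are left to the reader, in parallel with the treatment of Theorem \ref{th:lower}.
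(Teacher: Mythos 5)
Your route diverges from the paper's: the paper first applies the same type lemma for lines (Lemma \ref{lem:stype}, from \cite{BP2013}) to \emph{all} $m$ families, obtaining $\h_i\subset\F_i$ with $\lvert\h_i\rvert\ge D_m\lvert\F_i\rvert$ and the same type property, only then picks one representative per family, uses $m=\ESl(n)$ to find $n$ representatives defining an $n$-cell, and finishes with a one-line-at-a-time replacement argument (the triangle $x_{i-2}xx_{i+1}$ step) showing every transversal of the chosen $\h_{j_i}$ still defines an $n$-cell; this gives $C_n'=D_m=2^{-16m^2}$, doubly exponential in $n$. You pick representatives first and try to replace the same type lemma by an iterated majority pigeonhole on triple orientations, claiming $C_n'=2^{-O(n^2)}$. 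That substitution is where the proof breaks, and you have located the weak spot yourself without repairing it.

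Concretely, fixing for each $a\in\F_{j_i}$ the signs of the triples $(a,r_{j_{i'}},r_{j_{i''}})$ (one moved line, two reference lines) does not determine the signs of triples with two or three moved lines. In the dual point picture: knowing that every point of block $1$ lies on a fixed side of the line through two reference points, and every point of block $2$ lies on a fixed side of another reference line, says nothing about the side of the line spanned by a point of block $1$ and a reference point on which a point of block $2$ lies, because that line varies with the chosen point of block $1$; easy examples with two spread-out blocks give transversals of both orientations. There is no ``short perturbation argument'' available, since, as you note, the blocks are arbitrary and carry no localization; consistency of all cross-block intersection points against the lines of a third block is precisely the same type property, and producing it is the content of Lemma \ref{lem:stype}, whose proof is not a majority argument. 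A second, independent defect: even for the one-moved-line triples you must keep the \emph{majority} class in each split to retain a positive fraction, and the majority sign may disagree with the sign realized by the reference line $r_{j_i}$, so the surviving lines need not reproduce the orientation pattern of the reference $n$-cell, and you cannot re-run $\ESl(n)$ inside only $n$ blocks to recover convex position. This is exactly why the paper applies the same type lemma \emph{before} selecting representatives and accepts the doubly exponential constant $C_n'=D_m$ with $m=\ESl(n)$; your claimed bound $2^{-O(n^2)}$ would be a genuine strengthening that the proposed argument does not deliver.
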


For the proof we need the so called same type lemma for lines from
\cite{BP2013}. Three (nonempty) families $\F_1,\F_2,\F_3$ of lines in general
position in the plane are said to have the \emph{same type property} if for every
$\{i,j,k\}=\{1,2,3\}$ and for every choice of $a_i,b_i \in \F_i$, $a_j,b_j \in
\F_j$, no line in $\F_k$ separates the intersection points $a_i\cap a_j$ and
$b_i\cap b_j$. Next, let $\F_1,\dots,\F_m$ ($m\ge 3$) be finite families of
lines with $\bigcup \F_i$ in general position. These families have the
\emph{same type property}
if every three of them have it. Here is the same type
lemma for lines from \cite{BP2013}.

\begin{lemma}\label{lem:stype} For every $m\ge 3$ there is a constant $D_m>0$
such that given finite and nonempty families, $\F_1,\dots,\F_m$, of lines
whose union is in general position, there are subfamilies $\h_i \subset
\F_i$ with $\lvert\h_i\rvert \ge D_m\lvert\F_i\rvert$ (for every $i$) that
have the
same type property.
\end{lemma}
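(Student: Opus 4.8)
The plan is to reduce to three families and then to treat that case by adapting the \emph{method} behind the same type lemma for points, not its statement. The same type property for $m$ families means that every three of them have it, and the property is clearly hereditary: shrinking the families only restricts the choices of $a_i,b_i,a_j,b_j$ in the definition, so it is preserved. Hence I would process the $\binom m3$ triples one at a time, starting from $\h_i=\F_i$: for each triple $\{i,j,k\}$ apply the three family case to the current $(\h_i,\h_j,\h_k)$ and replace these three by the subfamilies it returns, leaving the others alone. Heredity keeps the triples treated earlier valid under the later shrinkings, and since each index lies in $\binom{m-1}2$ triples we obtain $\lvert\h_i\rvert\ge D_3^{\binom{m-1}2}\lvert\F_i\rvert$. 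So it suffices to prove the lemma for $m=3$.

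For three families it is convenient to reformulate the property. For $\{i,j,k\}=\{1,2,3\}$ write $V_{ij}=\{a\cap b:a\in\F_i,\ b\in\F_j\}$. A line of $\F_k$ separates two points of $V_{ij}$ for some choice of the four lines precisely when it crosses the interior of $\operatorname{conv}(V_{ij})$; conversely, if it misses that interior then all of $V_{ij}$ lies in one of its open halfplanes and it separates no pair. Thus the same type property for $\F_1,\F_2,\F_3$ is equivalent to the statement that, for each $k$, no line of $\F_k$ crosses $\operatorname{conv}(V_{ij})$. The goal becomes to find subfamilies $\h_i$ of positive fraction for which, for each $k$, every line of $\h_k$ avoids the interior of $\operatorname{conv}\bigl(V_{ij}(\h)\bigr)$.

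For the three family case I would imitate the iterated ham--sandwich / cutting construction of B\'ar\'any and Valtr, but with the lines themselves as the objects being separated. The aim is to shrink the three families to ``bundles'' that behave like three clusters in convex position, so that each intersection cluster $V_{ij}$ is pinched into a small region lying opposite to $\F_k$, with no line of the $k$th bundle able to reach across it. Shrinking $\h_i$ and $\h_j$ also thins out the set of connecting directions and hence the spread of $V_{ij}$, which is ultimately what lets $\h_k$ stay on one side; repeatedly bisecting the configuration and keeping, by pigeonhole, a positive fraction of each family in a mutually separated region should produce such bundles.

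The step I expect to be the main obstacle is the reason one cannot simply invoke the point same type lemma \cite{BV1998} through the duality $\{y=mx+c\}\mapsto(m,c)$. Under this map $a_i\cap a_j$ becomes the line $\overline{a_i^* a_j^*}$ through the dual points, and the statement that $a_k$ separates $a_i\cap a_j$ from $b_i\cap b_j$ translates into $a_k^*$ lying vertically between $\overline{a_i^* a_j^*}$ and $\overline{b_i^* b_j^*}$. This is an above/below relation tied to the vertical direction, \emph{not} the affine, rotation invariant orientation that the point same type lemma controls. If the slope ranges of the three families happen to be pairwise disjoint, then above/below and orientation coincide and the point lemma applies verbatim; but in general the ranges interleave, and then constant orientation of the dual triples in fact forces the forbidden ``between'' position. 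This interleaving cannot be removed by passing to positive fraction subfamilies (already two families of nearly parallel lines interleave in slope no matter how one prunes them). Equivalently, three dual clusters in convex position are separated in three generally distinct directions, none of which need be vertical, so convex position alone does not yield the no crossing condition for all three roles of $k$ at once. Carrying out the positive fraction selection with respect to this vertical predicate, for all three roles simultaneously, is the heart of the matter, and is presumably where the argument of \cite{BP2013} does its real work.
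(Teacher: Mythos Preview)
The paper does not prove this lemma at all: it is quoted verbatim from \cite{BP2013} and used as a black box. So there is no ``paper's own proof'' to compare against; the relevant question is whether your sketch actually constitutes a proof.

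Your reduction from $m$ families to three is correct and standard: the same type property is defined triple-wise and is hereditary under shrinking, so handling the $\binom m3$ triples in turn loses only a factor of $D_3^{\binom{m-1}2}$ per family. (This even matches the shape $D_m=2^{-16m^2}$ quoted in the Remark after Theorem~\ref{th:posfr2}.) Your reformulation of the three-family case is also right: with $V_{ij}=\{a\cap b:a\in\F_i,\,b\in\F_j\}$, the condition ``no line of $\F_k$ separates two points of $V_{ij}$'' is exactly ``no line of $\F_k$ meets the interior of $\operatorname{conv}(V_{ij})$''.

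The gap is that the three-family case is the entire content of the lemma, and you do not prove it. What you write there is a wish list (``shrink to bundles'', ``iterated ham--sandwich'', ``pinch $V_{ij}$ opposite to $\F_k$'') rather than an argument, and you yourself concede that this ``is presumably where the argument of \cite{BP2013} does its real work.'' Your discussion of why the point--line duality does not carry the point same type lemma over directly is accurate and well observed, but identifying the obstacle is not the same as overcoming it. In particular, nothing in your sketch explains how a single positive-fraction selection can simultaneously enforce the no-crossing condition for all three roles of $k$; the usual ham--sandwich step controls one bipartition at a time, and you give no mechanism for iterating without the fraction decaying to zero. As written, the proposal is a correct reduction followed by a pointer to the literature, which is exactly what the paper itself does---so it is not an independent proof.
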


\begin{proof}[Proof of Theorem~\ref{th:posfr2}]
The same type lemma guarantees the existence of subfamilies $\h_i \subset
\F_i$ with $\lvert\h_i\rvert \ge D_m\lvert\F_i\rvert$ (for every $i$) with the
same type property.
Let $m=\ESl(n)=O(4^n/\sqrt n)$ so here
$\lvert\h_i\rvert\ge C_n'\lvert\F_i\rvert$ with $C_n'=D_m$.
Choose a line $a_i$ from every family $\F_i$. By Theorem~\ref{mainresult},
there are subscripts $1\le j_1< j_2 <\dots <j_n\le m$ so that the lines
$a_{j_1},\dots,a_{j_n}$ define an $n$-cell $P$. Suppose first that it is
unbounded, its vertices are
$x_1,\dots,x_{n-1}$ in this order along the boundary of $P$ where
$x_i=a_{j_i}\cap a_{j_{i+1}}$ for $i=1,\dots,n-1$.

We claim that the families $\h_{ j_1},\h_{j_2},\dots,\h_{j_n}$ satisfy the
requirements.
For every $i$, $1\le i\le n$, let
$b_{j_i} \in \h_{j_i}$.
We
want to prove that the set of
these lines also define an $n$-cell. For this it suffices to show
that for a fixed $i$,
replacing $a_{j_i}$ by $b_{j_i}$ in the system $a_{j_1},\dots,a_{j_n}$,
the new system defines an $n$-cell as well. For simpler writing set $\ell_i=a_{j_i}$
and $b=b_{j_i}$.

Assume $n\ge 4$ and let
$x_{i-2},x_{i-1},x_i,x_{i+1}$ be consecutive vertices of $P$. Set
$x=\ell_{i-1}\cap \ell_{i+1}$. By the same type property, $b$ intersects the
segment $[x_{i-2},x]$ as otherwise $\ell_{i-2}$ or $\ell_{i+1}$ would separate
the points $x_{i-1}=\ell_{i-1}\cap \ell_i$ and $\ell_{i-1}\cap b$. Similarly $b$ intersects the segment
$[x,x_{i+1}]$. Consequently $b$ intersects the boundary of the triangle
$x_{i-2}xx_{i+1}$ on the segments $[x_{i-2},x]$ and $[x,x_{i+1}]$. So the new
cell is indeed an $n$-cell.

The same method works if $P$ is a bounded cell, just set $x_n=\ell_n\cap
\ell_1$, and write subscripts modulo $n$. An almost identical approach is to
be used when $P$ is unbounded and $i=1,2,n-1,n$. We omit the straightforward
(but tedious) details.
\end{proof}

\begin{remark}
The constant $D_m$ in the same type lemma is $2^{-16m^2}$ from \cite{BP2013},
giving a doubly exponential (in $n$) bound for $C_n'$. Note further that
Theorem~\ref{th:posfr2} implies Theorem~\ref{th:posfr}, actually a slightly
stronger version, but with a much weaker constant. We mention further that a
very general theorem on semi-algebraic sets due to Fox et al. in
\cite{FGL+2012} can also be used to prove Theorems~\ref{th:posfr} and
\ref{th:posfr2}, but the resulting constants are again much weaker.
\end{remark}

\section{Specified number of intersecting lines}

For any point set $P$ in general position in the plane, let $I(P)$ denote the
interior points of $P$, that is, those points of $P$ which are not on the
boundary of the convex hull of $P$.

The following conjecture of Avis et. al. \cite{AHU2001} is closely related to the Erd\H os-Szekeres Theorem:
For every $n$ there is a $g(n)$ such that any point set $P$ in the plane with
$\lvert I(P)\rvert\ge g(n)$, contains a subset $S\subseteq P$ such that there
are \emph{exactly} $n$ points of $P$ in the convex hull of $S$.
It is known only that $g(n)$ exists
for $n=0, 1, 2, 3$ (see \cite{AHU2001, WD2009}). However, the analogous problem for
lines is very easy.

\begin{theorem}
Let $\mathcal F$ be a finite family of lines. Assume a subfamily $\G\subset\F$
spans a cell $D$, and at least $n$ lines from $\F \setminus \G$ intersect
$D$. Then there is a subset $\h \subset\F$ defining a cell $C$, such that
\emph{exactly} $n$ lines from $\F \setminus \h$ intersect $C$.
\end{theorem}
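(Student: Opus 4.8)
The plan is to start from the cell $D$ spanned by $\G$ and shrink it step by step, one line at a time, controlling how the number of ``intruding'' lines changes. Let me set up notation: for a subfamily $\h\subset\F$ defining a cell, write $I(\h)$ for the set of lines in $\F\setminus\h$ that meet the interior of the cell $C(\h)$ that contains (or descends from) $D$; we want to reach a configuration with $\lvert I(\h)\rvert = n$ exactly. By hypothesis $\lvert I(\G)\rvert \ge n$. The key monotonicity observation is this: if $\ell \in I(\h)$, then adding $\ell$ to $\h$ splits $C(\h)$ into pieces, and the piece on one chosen side is a new cell $C(\h\cup\{\ell\})$ with strictly fewer intruding lines — indeed $\ell$ itself is now a boundary line and no line is gained, so $\lvert I(\h\cup\{\ell\})\rvert \le \lvert I(\h)\rvert - 1$. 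So the count can only go down as we enlarge $\h$, but possibly by more than one at a time; the issue is to avoid overshooting.

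The main step is therefore to show we can always decrease the intruder count by \emph{exactly} one when it exceeds $n$. First I would observe that among all lines in $I(\h)$ there is one, call it $\ell$, whose intersection pattern with $C(\h)$ is ``outermost'': more precisely, since $C(\h)$ is convex, each intruding line cuts off a region, and we can order the intruders by inclusion of the regions they cut off near a fixed vertex or edge of $C(\h)$, or simply pick $\ell$ so that it cuts off a region containing no other intersection point of lines of $\F$ inside $C(\h)$ — equivalently, $\ell$ together with two boundary lines of $C(\h)$ bounds an \emph{empty} subcell. Then when we replace $C(\h)$ by the larger subcell of $C(\h)$ on the far side of $\ell$, the line $\ell$ leaves $I$, but because the cut-off region was empty, \emph{no other} line of $I(\h)$ is lost: any other intruder still meets the new (only slightly smaller) cell. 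Hence $\lvert I(\h\cup\{\ell\})\rvert = \lvert I(\h)\rvert - 1$, and iterating from $\G$ we descend through every value from $\lvert I(\G)\rvert$ down to $n$, stopping exactly at $n$.

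The hard part will be making the ``outermost intruder'' choice rigorous and checking that removing it costs exactly one: one must verify that the region cut off by the chosen empty subcell really does miss every other intruding line, which requires a short convexity argument about how a line meeting a convex region and a chord of that region must meet the larger of the two subregions. A minor auxiliary point is ensuring the cell stays a bona fide $k$-cell of its defining subfamily (no incidental coincidences), which is immediate from the general position assumption. Once these routine geometric verifications are in place, the induction on $\lvert I(\h)\rvert$ closes the argument and produces the desired $\h$ and $C$.
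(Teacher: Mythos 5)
Your overall strategy (shrink the cell one intruder at a time, and argue that when the count exceeds $n$ it can be made to drop by exactly one) is the same as the paper's, and your monotonicity observation is fine. The genuine gap is precisely in the step you yourself flag as ``the hard part'': the selection rule for the outermost intruder, and neither of the criteria you sketch works. If ``the cut-off region is empty'' is taken in your literal sense --- no intersection point of two lines of $\F$ in its interior --- then the conclusion ``no other line of $I(\h)$ is lost'' does not follow: another intruder's chord can lie wholly inside the cut-off region while crossing nothing there, since its two endpoints sit on the boundary of $C(\h)$, not in the interior; cutting along $\ell$ and keeping the other side then loses two intruders at once, and since the count only decreases, an overshoot past $n$ is fatal (take $m=n+1$). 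If instead you mean the stronger property suggested by your ``equivalently'' clause --- a face bounded by $\ell$ and boundary lines of $C(\h)$ only, which forces the entire side of $\ell$ containing that face to be free of all other intruders --- then such an $\ell$ need not exist: take three intruders whose chords pairwise cross inside the cell; every face of the induced subdivision is bounded by at least two intruders, and each side of each chord contains pieces of the other two, so no intruder has a free side. The ``order by inclusion near a fixed vertex'' variant has the same defect: chords cutting off different corners give incomparable regions, a minimal (or maximal) element's near side can still contain another intruder's entire chord, and ``the larger subcell on the far side'' is then not even the side your rule tells you to keep.

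The repair --- which is the paper's proof --- is to make the choice relative to a reference \emph{point}, not to emptiness: fix a point $c$ on the boundary of the current cell lying on none of the intruders, and partially order the intruders by $\ell_i\prec\ell_j$ iff their chords are disjoint and the chord of $\ell_i$ separates that of $\ell_j$ from $c$. Take a \emph{maximal} element $\ell_i$, cut along it, and keep the piece containing $c$. Maximality says no other chord lies entirely beyond $\ell_i$, so every other intruder either crosses the chord of $\ell_i$ or lies on the $c$-side; in both cases it meets the kept cell, while lines of the old boundary that no longer bound it do not meet it at all. Hence the count drops by exactly one, even in the pairwise-crossing configuration above (there all intruders are maximal and any one of them works). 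With this selection device substituted for your ``empty cut-off region'', your induction closes; without it, the proposal does not prove the statement.
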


\begin{proof}
Suppose that $D$ is a $k$-cell defined by the subfamily $\G=\{a_1,\dots,a_k\}$ and $D$ is intersected
by lines $\ell_1, \ell_2, \ldots , \ell_m \in \F \setminus \G$, $m\ge n$. Let
$c$ be a point on the boundary of $D$ which is not on any of the lines $\ell_1, \ell_2,
\ldots , \ell_m$. For $i \in [m]$, let $s_i$ be the segment (or halfline)
$\ell_i \cap D$. Define a partial ordering among $\ell_i$, $i\in [m]$,
as follows. Let
$\ell_i\prec\ell_j$ if any only if $s_i$ and $s_j$ are disjoint and $s_i$
separates $s_j$ from point $c$ in $D$. Now let $\ell_i$ be a \emph{maximal} (minimal would do equally well)
element with respect to $\prec$. Then $\ell_i$ splits the cell $D$ into two
cells; let $C$ be the half containing the point $c$. The lines bounding $C$ form
a subfamily $\h \subset \F$ with $\ell_i \in \h$. Every line $\ell_j, j\ne i$
intersects $C$, but no others.
Therefore, $C$ is intersected by $m-1$ lines. So we can always
decrease the number of crossing lines by one. This proves the theorem.
\end{proof}

\section{Points and lines mixed}

Assume we have a family $\F$ of $N$ lines and a family $S$ of $N$ points in
the plane, in general position. Given a (possibly unbounded) $2n$-gon $K$, we
say that $K$ is determined by $(\F,S)$ if it has $n$ vertices from $S$ and $n$
edges that are subsets of $n$ lines form $\F$. Can one always guarantee the
existence of a $2n$-gon determined by $(\F,S)$?

The answer to this question is no if $n>2$. A simple example showing this can
be obtained if $\F$ defines a cup $P$, and all the points in $S$ are separated
from $P$ by every line in $\F$.

Something can still be done, for example if we allow $S$ to be
translated. Recall $f_l$ from Section~\ref{sec:upper} or from
Lemma~\ref{lem:cupcap}.

\begin{theorem}
Let $N=f_l(2n-1,2n-1)$. If $\F$ is a family of at least $N$ lines and $S$ is a
family of $N$ points, then there is a translation vector $t$ and a convex
$2n$-gon $K$ determined by $(\F,t+S)$.

\end{theorem}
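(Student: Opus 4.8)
The plan is to dualize the cup/cap machinery so that a $(2n-1)$-cup or $(2n-1)$-cap of lines in $\F$ can be ``filled in'' with points of a suitable translate of $S$. First I would apply Lemma~\ref{lem:cupcap}: since $\lvert\F\rvert\ge N=f_l(2n-1,2n-1)=\binom{4n-6}{2n-3}+1$, the family $\F$ contains either a $(2n-1)$-cup or a $(2n-1)$-cap; by symmetry (reflecting everything about a horizontal line, which swaps cups and caps and maps translates of $S$ to translates of the reflected $S$) assume we have a $(2n-1)$-cup formed by lines $b_1,\dots,b_{2n-1}$ ordered by slope. These lines bound an unbounded $(2n-1)$-cell $P$ that is open from above, and consecutive pairs $b_j,b_{j+1}$ meet at vertices $v_1,\dots,v_{2n-2}$ lying on the lower boundary of $P$, with $v_j$ sitting strictly below the line $b_{j-1}\cap$ (extended) and the chain $v_1,\dots,v_{2n-2}$ forming the vertices of a convex cap (as a point set). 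The idea is to keep the $n$ lines $b_1,b_3,b_5,\dots,b_{2n-1}$ (the odd-indexed ones) as the $n$ edges of our $2n$-gon, and to use a translate of $S$ to supply the $n$ vertices needed to complete it into a convex $2n$-gon whose consecutive edges alternate between these $n$ lines and $n$ ``point-vertices''.

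Next I would make precise which region each point-vertex must fall into. For the cell $P$ cut down to the sub-cup on $b_1,b_3,\dots,b_{2n-1}$, consecutive odd-indexed lines $b_{2i-1},b_{2i+1}$ meet at a point $w_i$; the polygon $b_1,w_1,b_3,w_2,b_5,\dots,w_{n-1},b_{2n-1}$ is itself an $(2n-1)$-cell (an $n$-cup of the $n$ chosen lines), and a $2n$-gon determined by $(\F,S')$ arises precisely if I can choose points $p_1,\dots,p_n$ of $S'$ with $p_i$ lying in the small open triangular region $R_i$ cut off near $w_i$ (for $i=1,\dots,n-1$) between $b_{2i-1}$, $b_{2i+1}$ and the line $b_{2i}$, together with one more point $p_n$ lying below and ``outside'' near one of the two end lines so that the whole configuration is in convex position and the boundary alternates line–vertex–line–vertex. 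Since $\F$ is in general position these regions $R_i$ are nonempty open sets, pairwise disjoint, and their shape and position are fixed. The task reduces to: find a translation $t$ so that $t+S$ has a point in each of the $n$ prescribed open regions.

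Finally, I would realize this as a pigeonhole/compactness argument. Translate the whole picture so that the regions $R_1,\dots,R_n$ can be assumed to lie far below the point cloud $S$; then slide $S$ downward (and sideways) along a generic direction. As the translate $t+S$ sweeps across, each individual point of $S$ passes through each region $R_i$ over an open set of translation vectors $t$; one shows that for a suitable one-parameter family of translations the ``event'' that $t+S$ simultaneously meets all $n$ regions must occur, because $\lvert S\rvert=N$ is large enough that some point lands in $R_1$ while the rigid offsets force other points into $R_2,\dots,R_n$ — here one uses that the mutual positions of the $R_i$ are exactly the mutual positions of the vertices $w_i$ of an $n$-cup, which is the same combinatorial shape as a convex cap of points, so a cap of $n$ points in $S$ (guaranteed, since $N\ge f_l(2n-1,2n-1)\ge \ES(n)$) can be translated to fit. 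Concretely: apply the Erd\H os--Szekeres cup/cap bound to the point set $S$ to extract an $n$-cap (or $n$-cup) of points; an affine map sending that cap to sit one-in-each-$R_i$ is not quite a translation, so the honest argument scales the construction of the lines $b_j$ first (which we are free to do, since the lines in $\F$ are given but we may apply the cell-preserving affine normalization from Section~\ref{sec:upper} to $\F$ before choosing $S$'s translate) so that the regions $R_i$ match, up to translation, the shape of the extracted cap in $S$.

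\textbf{Main obstacle.} The delicate point is the last step: a translation has only two degrees of freedom, whereas placing $n$ points into $n$ prescribed regions looks like $2n$ constraints. The resolution is that we do not get to choose the regions independently — both the regions $R_i$ (coming from an $n$-cup of lines) and the required point pattern (an $n$-cap in $S$) are convex chains of the \emph{same combinatorial type}, and duality (cups of lines $\leftrightarrow$ caps of points, as established before Lemma~\ref{lem:cupcap}) lets us match them after an allowed affine normalization of $\F$, leaving only a genuine translation to be found. Making this matching rigorous — in particular checking that the point landing in the end region $R_n$ and the alternation of edges around the boundary really do yield a convex $2n$-gon rather than a self-overlapping one — is the part that needs care, and is exactly the kind of ``straightforward but tedious'' verification the paper elsewhere leaves to the reader.
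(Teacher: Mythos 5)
Your first step (extracting a $(2n-1)$-cup or cap from $\F$ via Lemma~\ref{lem:cupcap}) matches the paper, but the core of your argument has a genuine gap that I do not think can be repaired in the form you propose. You reduce the problem to finding a translation $t$ that places $n$ specific points of $S$ into $n$ prescribed small disjoint regions $R_1,\dots,R_n$ determined by the fixed lines $b_1,b_3,\dots,b_{2n-1}$. A translation has two degrees of freedom, and having ``the same combinatorial type'' (a cap in $S$ versus the cup-shaped chain of the $w_i$) says nothing about the metric shapes: there is in general no rigid translation carrying an $n$-point cap of $S$ into $n$ arbitrarily prescribed tiny triangles, no matter how large $N$ is. Your proposed fix --- applying a cell-preserving affine normalization to $\F$ so that the regions match the cap in $S$ up to translation --- is not available: $\F$ is given, and if you transform the whole plane by an affine map $A$ you end up producing a $2n$-gon for $(A\F, At+AS)$, i.e.\ a translate of $AS$, not a translate of $S$, which is not what the theorem asserts. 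The pigeonhole/sweeping sketch does not close this hole; the ``event'' that all $n$ regions are hit simultaneously simply need not occur. Note also that you are solving a harder problem than necessary: the definition of a $2n$-gon determined by $(\F,S)$ does not require edges and $S$-vertices to alternate, only that $n$ edges lie on lines of $\F$ and $n$ vertices lie in $S$, and it is exactly your alternation requirement that forces the over-constrained placement.

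The paper's proof sidesteps this by using only one gluing point, which is exactly what two degrees of freedom can buy. Apply the cup/cap bound to both $\F$ and $S$ (note $N=f_l(2n-1,2n-1)$ equals the corresponding point bound). If one gives a cup and the other a cap, a translation obviously produces even a $(4n-2)$-gon. In the remaining case both are cups, say lines $a_1,\dots,a_{2n-1}$ (by slope) and points $p_1,\dots,p_{2n-1}$ (by $x$-coordinate). Pick a point $p$ on the middle line $a_n$ lying above all other lines $a_i$, and a line $a$ through the middle point $p_n$ with all other $p_i$ above it; translate $S$ so that $p_n=p$. Then (in the case where the slope of $a_n$ is at most that of $a$; the other case is symmetric) the polygon with vertices $q_1,q_2,\dots,q_n,p_n,\dots,p_{2n-1}$, where $q_i=a_{i-1}\cap a_i$ for $2\le i\le n$ and $q_1$ is a far point on $a_1$ above the other lines, is convex, has $n$ edges on $a_1,\dots,a_n$ and $n$ vertices $p_n,\dots,p_{2n-1}$ in the translated $S$. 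In other words, half of the line-cup and half of the point-cup are concatenated at the single point $p_n\in a_n$, so only one incidence has to be arranged by the translation; your approach instead tries to arrange $n$ incidences, which is where it breaks down.
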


\begin{proof}
Because of the definition of $f_l$, $\F$ spans a $(2n-1)$-cup or
$(2n-1)$-cap. The analogous statement for points also holds for $S$. If $\F$
determines a $(2n-1)$-cup and $S$ determines a $(2n-1)$-cap (or vice-versa)
then we can clearly find a vector $t$ and a $(4n-2)$-gon determined by
$(\F,t+S)$.

Therefore we may assume that $\F$ spans a $(2n-1)$-cup
$\{a_1,\dots,a_{2n-1}\}$ and $S$ determines a $(2n-1)$-cup
$\{p_1,\dots,p_{2n-1}\}$. Here we order the lines by slope and the points by
their $x$-coordinate.

Let $p$ be a point on the line $a_n$ and above all other lines $a_i$. Let $a$
be a line containing $p_n$ and having all the other points $p_i$ above
it. After applying a translation on $S$, we may assume that $p=p_n$. There are
two possibilities for the lines $a$ and $a_n$, since they are symmetric we
only deal with one of them.

If the slope of $a_n$ is smaller than or equal to the slope of $a$, let $q_1$
be a point in $a_1$ above all other lines $a_i$, and $q_i=a_{i-1}\cap a_i$ for
$i=2,\dots,n$. Then the $2n$-gon $K$ with vertices
$q_1,\dots,q_n,p_n,\dots,p_{2n-1}$ is determined by $(\F,S)$.
\end{proof}

\section{Final remarks}

Not surprisingly, the higher dimensional case of the Erd\H os-Szekeres line
theorem follows from the 2-dimensional one. Indeed, suppose that $d>2$ and
we have $ES_l(n)$
hyperplanes in general position in $\R^d$, that is, no two are parallel
and no $d+1$ of them have a common point.
Then
there are always $n$ among
them, say $h_1,\dots,h_n$ and a convex polytope
$P$ such that each $h_i$
contains a $(d-1)$-dimensional face of $P$. The proof is easy: one chooses a
2-dimensional plane $L$ in general position in $\R^d$. The hyperplanes intersect
$L$ in lines in general position that span an $n$-cell $Q$ in $L$, defined
by lines $h_1\cap L,\dots,h_n\cap L$. Then $Q$ is the intersection of $L$ with
the cell $P$ which is a connected component of the complement of the union of
$h_1,\dots,h_n$. Consequently $P$ has the required properties.

What happens if, instead of lines, a finite family $\F$ of halfplanes is given
in the plane? In this case, no matter how large $\lvert\F\rvert$ is, one can't
find $n$ halfplanes whose intersection is a convex $n$-gon (or $n$-cell). The
example showing this comes from the regular convex $N$-gon which is the
intersection of $N$ halfplanes: the family of the complementary halfplanes has
no subset of size $n>4$ whose intersection is an $n$-cell. On the other hand,
given $N=ES_l(2n)$ halfplanes in the plane (with their bounding lines in
general position), there are always $n$ among them so that either their
intersection, or the intersection of their complements is an $n$-cell. This is quite
simple: the $N$ bounding lines define a $2n$-cell $P$ and either half of the
corresponding halfplanes contain $P$ or half of the complementary halfplanes
contains it, and so they define a convex $n$-cell.

\section{Acknowledgements}

Research of the first and second authors was partially supported by ERC
Advanced Research Grant no 267165 (DISCONV), and research of the first and
third authors by Hungarian Science Foundation Grant OTKA K 83767 and K 111827.
Research of the third author was also supported by Hungarian Science
Foundation Grant OTKA NN 102029 under the EuroGIGA programs ComPoSe and GraDR.
We would like to thank Miguel Raggi who offered to write the program that
determined the value of $\ESl(5)$.

\end{document}